\theoremstyle{definition}
\newtheorem{definition}{Definition}[section]
\newtheorem{theorem}{Theorem}[section]
\newtheorem{lemma}[theorem]{Lemma}
\newcommand\vertarrowbox[3][6ex]
\uparrow\vcenter{\hrule height #1}\right.\kern-\nulldelimiterspace\\
\title{Virtually special non-finitely presented groups via linear characters}
\author{Vladimir Vankov\footnote{School of Mathematical Sciences, University of Southampton, v.vankov@soton.ac.uk}}
\date{\today}
\begin{document}

\tikzset{middlearrow/.style=
{
        decoration=
        {markings,
            mark= at position 0.53 with {\arrow{#1}} ,
        },
        postaction={decorate}
    }
}
 
\maketitle

\begin{abstract}
We present a new method for showing that groups are virtually special. This is done by considering finite quotients and linear characters. We use this to show that an infinite family of groups, related to Bestvina-Brady groups and branching, provides new examples of virtually special groups outside of a hyperbolic context.
\end{abstract}
 
\section{Introduction}
\label{sec:1}

Given a simplicial graph $\Gamma$, the associated \textit{right-angled Artin group} (RAAG) is the group $A_\Gamma$ with generators the vertices of $\Gamma$, and relations being commutators between vertices joined by an edge. Complete graphs give free abelian groups and empty graphs give free groups. Subgroups of RAAGs have many rich properties, such as being residually finite.

Consider the \textit{Bestvina-Brady group} from \cite{bestvinabrady}, corresponding to the minimal flag triangulation $L$ of the circle. The associated group $A_L$ splits as $F_2\times F_2$ in this case, and we can get a presentation:
$$
G=\left<x_1,x_2,x_3,x_4\ \middle|\ x_1^i x_2^i x_3^i x_4^i\text{ for }i\in\mathbb{Z}\right>.
$$
The group $G$ is defined as the kernel of a map from $A_L$ to $\mathbb{Z}$ (more on this in Section \ref{sec:3}), therefore is naturally a subgroup of a RAAG.

We ask what happens if we vary the number of generators, and replace some of the relations by their powers. The resulting groups would have torsion, so cannot be subgroups of RAAGs. Nonetheless, we could look at torsion-free finite-index subgroups, and see if we can embed those into RAAGs. Certain properties of subgroups of RAAGs, such as residual finiteness, pass between finite-index subgroups.

\begin{theorem}
For each integer $m\geqslant4$ and prime $k\geqslant2$, the group
$$
G_m^k:=\left<x_1,x_2,\dots,x_m\ \middle|
\begin{aligned}
&\ \ x_1^i x_2^i\cdots x_m^i\\
&\left(x_1^i x_2^i\cdots x_m^i\right)^k
\end{aligned}
\ 
\begin{aligned}
&\text{ for }i\in k\mathbb{Z}\\
&\text{ for }i\in \mathbb{Z}\backslash k\mathbb{Z}
\end{aligned}\right>
$$

is virtually special. In particular, it is residually finite.
\label{thm:main}
\end{theorem}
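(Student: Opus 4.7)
The plan is to exhibit a torsion-free normal subgroup $K \lhd G_m^k$ of finite index which sits inside a right-angled Artin group; by Haglund-Wise any such $K$ is special, so $G_m^k$ is virtually special, and residual finiteness follows since subgroups of RAAGs are residually finite and residual finiteness is inherited by finite extensions.

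The first step is to compute the abelianisation. All defining relators abelianise to multiples of $S := x_1 + \cdots + x_m$ and collapse to the single relation $kS = 0$, giving $(G_m^k)^{\mathrm{ab}} \cong \mathbb{Z}^{m-1} \oplus \mathbb{Z}/k$, with the torsion factor carrying the classes of the torsion elements $w_i := x_1^i \cdots x_m^i$ for $i \notin k\mathbb{Z}$. Consequently any homomorphism $\chi \colon G_m^k \to \mathbb{Z}/k$ is automatically well-defined from the presentation, and to detect the torsion one only needs $\chi(S) \neq 0$; since $k$ is prime, the character sending $x_1 \mapsto 1$ and $x_j \mapsto 0$ for $j \geq 2$ suffices, giving $\chi(w_i) = i \not\equiv 0 \pmod k$ for $i \notin k\mathbb{Z}$.

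The central step is to analyse $K := \ker \chi$ (of index $k$) via Reidemeister-Schreier and identify it with the Bestvina-Brady group of a $k$-fold branched cover $\tilde{L}$ of the $m$-cycle $L$. Using $\{1, x_1, \ldots, x_1^{k-1}\}$ as a Schreier transversal, the commutation pattern of the $x_j$ should lift to a $\mathbb{Z}/k$-equivariant flag complex $\tilde{L}$, and the Schreier presentation of $K$ should rewrite into the Dicks-Leary form $\tilde{x}_1^i \cdots \tilde{x}_N^i = 1$ for all $i \in \mathbb{Z}$: the "rigid" relations $w_i = 1$ for $i \in k\mathbb{Z}$ should supply BB relations downstairs, while the torsion relations $w_i^k = 1$ for $i \notin k\mathbb{Z}$ should unwind across the $k$ cosets to give the remaining BB relations. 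Once $K \hookrightarrow A_{\tilde{L}}$ is established, $K$ is special and the theorem follows.

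The hardest part will be this identification: verifying via Reidemeister-Schreier that the presentation of $K$ really matches $BB_{\tilde{L}}$ for a bona fide flag complex $\tilde{L}$, while simultaneously controlling torsion in $K$. A priori $\chi$ only kills the abelianised classes of the $w_i$, so torsion could survive in $K$ via non-abelian combinations unless $K$ truly is a BB group; if a single character is insufficient, the advertised "linear character" method presumably prescribes taking an intersection of kernels of several such $\chi$ (and iterating the branched-cover construction) to extract a torsion-free finite-index subgroup that is genuinely of Bestvina-Brady type.
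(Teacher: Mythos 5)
Your proposal correctly computes the abelianisation and correctly identifies that a single character $\chi\colon G_m^k\to\mathbb{Z}/k$ with $\chi(x_1)=1$, $\chi(x_j)=0$ for $j\geq2$ has a torsion-free kernel $K$ of index $k$: every torsion element of $G_m^k$ is a conjugate of some $w_i^\ell$ with $i\not\in k\mathbb{Z}$ and $1\leqslant\ell<k$, and $\chi(w_i^\ell)=i\ell\not\equiv0\pmod k$ since $k$ is prime. The gap is in the next step: torsion-freeness alone gives you nothing about being special, and your proposed route of showing $K$ is (cocompactly) a Bestvina--Brady group via Reidemeister--Schreier does not go through. The group $K$ acts freely on the branched cover $X_m^k$, so $X_m^k/K$ is a classifying space for $K$, but this quotient has vertex links alternating between the spherical double of $L$ (at heights in $k\mathbb{Z}$) and the spherical double of $M$ (at branching heights), whereas a genuine $\mathbb{B}_{\tilde L}$ has all its vertex links isomorphic to the spherical double of a single flag complex $\tilde L$. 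So $X_m^k/K$ is not a $\mathbb{B}_{\tilde L}$, and there is no a priori reason for the Schreier rewriting to collapse into the Dicks--Leary form you hope for. (Indeed, none of the finite-index torsion-free subgroups here are claimed in the paper to be Bestvina--Brady groups.)

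The paper's actual strategy is different in two essential ways. First, it uses the much larger quotient $\phi_m^k\colon G_m^k\twoheadrightarrow C_k^m$, $x_i\mapsto\sigma_i$, whose kernel has index $k^m$; the extra room in $C_k^m$ is exactly what lets one define $m$ independent linear characters $\mathfrak{D}(1),\dots,\mathfrak{D}(m)$. Second, and crucially, the characters are used \emph{geometrically}, not to rewrite a presentation: one takes the quotient cube complex $\bar{X}_m^k=X_m^k/\ker\phi_m^k$, computes the hyperplane stabilisers $\mathrm{Stab}([\bar{\mathbf{u}}_j])=\langle\sigma_{j-1}\sigma_j\rangle$ via the fundamental domain, and then shows directly that $\bar{X}_m^k$ satisfies the four Haglund--Wise specialness conditions (two-sided hyperplanes, no self-crossing, no self-osculation, no inter-osculation). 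Each forbidden hyperplane configuration forces a certain membership $h\in g\cdot(\text{coset of a stabiliser})$, and a suitable product $\mathfrak{D}(j\pm1)\mathfrak{D}(j)^{\mp1}$ or $\mathfrak{D}(j+2)$ (this is where $m\geqslant4$ enters) evaluates to $1$ on the coset but to a nontrivial root of unity on the offending element, giving a contradiction. Your single-character kernel $K$ properly contains $\ker\phi_m^k$, so $X_m^k/K$ is a further quotient of $\bar{X}_m^k$; specialness does not descend along quotients, so you would still have to redo the entire hyperplane analysis for $X_m^k/K$, and with only one character at your disposal you would not be able to rule out the osculation configurations. Your closing speculation about intersecting kernels of several characters is pointing in the right direction, but the point is not to make the kernel a BB group --- it is to make the quotient complex special, and for that you need the characters as a tool to separate hyperplane cosets.
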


Special cube complexes were introduced by Haglund and Wise in \cite{special}. They are nonpositively curved cube complexes whose fundamental groups embed into RAAGs. We say that a group is \textit{virtually special} if it has the fundamental group of a special cube complex as a finite-index subgroup.

Bestvina-Brady groups have classifying spaces which are special cube complexes. The groups in Theorem \ref{thm:main} arise from branched covers of classifying spaces of Bestvina-Brady groups corresponding to different flag triangulations of the circle. In particular, the group $G_m^k$ corresponds to a generalised Bestvina-Brady group $G_L^M(S)$ by taking $M\to L$ to be the $k$-regular covering of the regular $m$-gon by the regular $km$-gon (thought of as flag triangulations of the circle), with branching set $S=k\mathbb{Z}$ (more on this in Section \ref{sec:3}). This kind of construction was first introduced in \cite{uncountably}.

The idea of the proof  of Theorem \ref{thm:main} first came from the author representing edges in complexes by matrices, using representations of finite quotients and height functions to work in some $GL_n(V)\oplus\mathbb{Z}$. This allows us to turn geometric information about hyperplanes into algebraic statements. After noticing that considering determinants of these matrices can be useful, linear characters entered the picture.

A lot of progress concerning special cube complexes has been surrounding hyperbolic groups (such as Agol's theorem \cite{agol}), however in our case the virtually special groups are not finitely presented (see Section \ref{sec:7}), hence not hyperbolic. Furthermore, we have explicit presentations for our groups.

The paper is structured as follows. Sections \ref{sec:2} and \ref{sec:3} contain the relevant background on special cube complexes and Bestvina-Brady groups, respectively. Section \ref{sec:4} describes the action of the groups $G_m^k$ on the associated CAT(0) complexes $X_m^k$ in terms of fundamental domains (as well as the \hyperlink{apx}{appendix}). Section \ref{sec:5} shows that the groups are virtually torsion-free and introduces specific quotients. The quotient complex is considered, and the structure of hyperplanes is examined. Section \ref{sec:6} showcases the method of utilising linear characters to prove that the quotient complex is special, and proves Theorem \ref{thm:main}. Section \ref{sec:7} distinguishes the groups, examines particular infinite behaviour of the hyperplanes and considers further generalisations.

I would like to thank Ian Leary, my PhD supervisor, for the support and great conversations full of helpful suggestions.

\section{Background on Special Cube Complexes}
\label{sec:2}

We will only need to consider 2-dimensional square complexes in this paper. Let $I=[0,1]$ be the unit interval. An \textit{n-cube} for $n>0$ is a copy of $I^n$ and a 0-cube is just a vertex. A \textit{cube complex} is a cell complex where every cell is some $n$-cube, with the attaching maps being combinatorial on cubes. This means that the attaching maps must send $n$-cubes isometrically to $n$-cubes when we consider the boundaries of cubes as being the union of lower-dimensional cubes. In our case, squares will be glued together by sending vertices to vertices and edges to edges.

The \textit{link} of a vertex $v$ is a simplicial complex whose vertices correspond to ends of 1-cubes attached to $v$, and these are joined by an $n$-simplex for each corresponding $(n+1)$-cube having a corner at $v$. In our case, links will be simplicial graphs.

A \textit{flag complex} is a simplicial complex where any finite collection of vertices that are pairwise joined by a 1-simplex span a simplex. We say that a cube complex is \textit{nonpositively curved} if the link of every vertex is a flag complex. In our case, this means that every link is triangle-free. We refer to simply connected nonpositively curved cube complexes as $\text{CAT}(0)$ cube complexes.

Let $X$ be a nonpositively curved  cube complex. `Square' will refer to a 2-cube in $X$, and `edge' will refer to a 1-cube. For edges $u$, $v$ which are opposite each other in some square of $X$, we write $u\sim v$ and say that $u$ and $v$ are elementary parallel. This induces an equivalence relation on the edges. By abuse of notation, we write $u\sim w$ if edges $u,w$ lie in the same equivalence class and say they are parallel. We denote the equivalence class of $u$ by $[u]$, and we call this a hyperplane. If we induce an orientation on edges in such a way that elementary parallelism keeps track of orientation, we say that a hyperplane $[u]$ is not two-sided if $u\sim-u$, and two-sided otherwise. We also write $[u]\sim[v]$ if $u,v$ lie in the same hyperplane.

\begin{definition}[Hyperplane interactions, \cite{special}]
If two edges $u,v$ are adjacent in a square (intersect at a corner of the square), we write $u\perp v$. We write $[u]\perp[v]$ and say that hyperplanes $[u],[v]$ cross if there exist edges $u',v'$ such that $u'\perp v'$ and $u'\sim u$, $v'\sim v$. If two edges $w,x$ share a vertex (intersect at a 0-cell), but there does not exist a square that contains both of them where they are adjacent (i.e. $w\not\perp x$), we write $w\circlearrowright x$. We write $[w]\circlearrowright[x]$ and say that hyperplanes $[w],[x]$ osculate if there exist edges $w',x'$ such that $w'\circlearrowright x'$ and $x'\sim x$, $w'\sim w$.
\end{definition}

One can define a special cube complex in terms of avoiding certain configurations of hyperplane interactions. Note that while $\circlearrowright$ and $\perp$ are relations, only $\sim$ is an equivalence relation.

\begin{definition}[Special Cube Complex, \cite{special}]
\label{pre:1}
If $X$ is such that for all edges $u$, $[u]$ is two-sided, and for any pair of (not necessarily distinct) edges $u,v$, we have at most one of the relations $\sim$, $\perp$, $\circlearrowright$ holding between $[u]$ and $[v]$, then we say that $X$ is a special cube complex.
\end{definition}

If $\sim$ and $\perp$ hold, then a hyperplane crosses itself, so is not an embedded hyperplane. If $\sim$ and $\circlearrowright$ hold, then a hyperplane self-osculates. If both $\perp$ and $\circlearrowright$ hold between a pair of hyperplanes, they inter-osculate. There are essentially 4 components to Definition \ref{pre:1}, we will refer to them in Section \ref{sec:6}.

\begin{theorem}[Haglund and Wise, \cite{special}]
\label{pre:wise}
If $X$ is a special cube complex, then $\pi_1(X)$ embeds into a Right-Angled Artin Group. In particular, if $X$ contains finitely many hyperplanes, then $\pi_1(X)$ embeds into a finitely generated Right-Angled Artin Group.
\end{theorem}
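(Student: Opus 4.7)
My approach would be to construct an explicit combinatorial map from $X$ into the Salvetti complex of a suitable RAAG and then check that it is a local isometry, so that $\pi_1$-injectivity follows by standard CAT(0) cube complex theory.

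First I would build the target RAAG from $X$ itself. Let $\Gamma$ be the \emph{crossing graph} with one vertex for each hyperplane of $X$ and an edge between each pair of crossing hyperplanes. Let $S_\Gamma$ be the Salvetti complex of $A_\Gamma$ (its $2$-skeleton is enough, since we only consider $2$-dimensional square complexes): a single $0$-cell $v_0$, an oriented loop $e_{[u]}$ for each hyperplane $[u]$, and one commutator square glued in for each edge of $\Gamma$. Using two-sidedness, I fix a coherent orientation on each hyperplane class of $X$. Then I define $f : X \to S_\Gamma$ by sending every $0$-cell to $v_0$, each edge of $X$ to the loop $e_{[u]}$ with the chosen orientation, and each square of $X$ to the commutator square attached to the two hyperplane classes appearing on its boundary.

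The core of the argument is to show that $f$ is a combinatorial local isometry, which amounts to verifying that the induced link map $\mathrm{lk}(v)\to\mathrm{lk}(v_0)$ at each $0$-cell of $X$ is injective onto a full subcomplex. Two-sidedness makes $f$ well-defined on oriented edges. Injectivity on link vertices corresponds to the absence of self-osculation: otherwise two distinct edges at $v$ carrying the same orientated hyperplane class would witness a self-osculation. The no-self-crossing condition prevents a single square from having both opposite pairs in the same hyperplane, giving injectivity on link edges. Fullness of the image corresponds exactly to the absence of inter-osculation: if two edges at $v$ belong to hyperplanes that cross somewhere in $X$, they must already bound a square at $v$. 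These four checks are in bijection with the four components of Definition \ref{pre:1}, so specialness gives precisely what is needed.

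Once $f$ is a local isometry between nonpositively curved cube complexes, the cubical Cartan-Hadamard theorem supplies a lift $\tilde f : \tilde X \hookrightarrow \tilde S_\Gamma$ onto a convex subcomplex of the CAT(0) cube complex $\tilde S_\Gamma$, whence $f_* : \pi_1(X)\hookrightarrow\pi_1(S_\Gamma)=A_\Gamma$. The ``in particular'' clause is then immediate: finitely many hyperplanes in $X$ means finitely many vertices of $\Gamma$, and hence a finitely generated RAAG. The step I expect to be most delicate is the four-way translation between the specialness axioms and the link-level injectivity/fullness conditions; each axiom is used at exactly one point and none can be dropped, so one has to be meticulous about which hyperplane pathology rules out which link defect.
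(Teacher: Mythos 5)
The paper cites this result from Haglund--Wise \cite{special} without giving a proof, so there is no in-paper argument to compare against; your proposal correctly reconstructs the standard argument from that reference (crossing graph $\Gamma$, combinatorial map $X \to S_\Gamma$, local isometry via the four specialness axioms, Cartan--Hadamard for $\pi_1$-injectivity). Two small precision points: injectivity of the link map on vertices relies on both the absence of self-osculation (for edge pairs at $v$ not cospanning a square) and the absence of self-crossing (for pairs that do), while the real role of no-self-crossing is to ensure each square of $X$ has two distinct hyperplane classes, so that $f$ is even well-defined on squares and non-degenerate on link edges; and you should take the target to be the full Salvetti complex rather than its $2$-skeleton, since nonpositive curvature of the target is what makes a local isometry $\pi_1$-injective (the image still lands in the $2$-skeleton because $X$ is $2$-dimensional, and fullness in the $1$-skeleton of $\mathrm{lk}(v_0)$ upgrades to fullness in the flag link). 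Neither point affects the validity of your argument.
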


Having finitely many hyperplanes is important, as a finitely-generated Right Angled Artin Group in particular embeds into $SL_n(\mathbb{Z})$ for some finite $n$.

\section{Background on Bestvina-Brady Groups}
\label{sec:3}

Bestvina-Brady groups (introduced in \cite{bestvinabrady}) are by definition normal subgroups of Right-Angled Artin Groups (RAAGs), hence they are linear over $\mathbb{Z}$ and enjoy properties such as being residually finite. If a space has fundamental group $G$ and has contractible universal cover, we say that space is a \textit{classifying space} for $G$.

\begin{definition}[Salvetti complex]
Let $A_\Gamma$ be a RAAG defined by a graph $\Gamma$. Then $A_\Gamma$ has a natural classifying space, called the Salvetti complex, formed from one vertex, one loop for each Artin generator, and an $n$-torus for each $n$-clique in $\Gamma$, glued appropriately.
\end{definition}

\hypertarget{sdbl}{The} link of the vertex in the Salvetti complex corresponding to $A_\Gamma$ is a ``spherical double'' of the flag complex with 1-skeleton $\Gamma$. This is formed by taking each vertex $v$ and replacing it with two vertices $v^{+},v^{-}$, such that a set of vertices form a simplex if the corresponding vertices formed a simplex when forgetting about the superscripts. Note that this remains a flag complex if the original complex was flag. We will focus on the case when these links are finite complexes.

\begin{definition}[Bestvina-Brady Group]
Let $L$ be a connected finite flag complex. Define $BB_L$ to be the kernel of the homomorphism from the RAAG $A_L$ (associated with the 1-skeleton of $L$) to $\mathbb{Z}$, which sends every Artin generator to 1 in $\mathbb{Z}$.
\end{definition}

The finiteness properties of $BB_L$ are controlled by the choice of complex $L$. Note that there is a natural correspondence between flag complexes $L$ and their 1-skeletons $\Gamma$, hence we can refer to a group $A_L$. One can think of the naturally associated classifying space $\mathbb{B}_L$ either as the quotient of the universal cover of the Salvetti complex of $A_L$ by $BB_L$, or as a $\mathbb{Z}$-cover of this Salvetti complex (which gives us a natural height function $f$). Notice that the ascending and descending links (which are the induced subcomplexes of the link on edges pointing either up or down, respectively) of vertices in $\mathbb{B}_L$ are all isomorphic to $L$. Since there are only countably many finite connected flag complexes, there are at most countably many groups $BB_L$.

This construction was generalised by Leary in \cite{uncountably}, to give uncountably many more such groups for each finite connected flag complex.

\begin{definition}[Generalised Bestvina-Brady Group]
\label{pre:2}
Let $M\to L$ be a regular cover. Following the start of section 21 in \cite{uncountably}, define $G_L^M(S)$ to be the group of deck transformations of the branched cover $X_L^M(S)$ of the classifying space $\mathbb{B}_L$ of $BB_L$. The branching occurs at vertices with heights not in $S$, when the $BB_L$-orbits of vertices in the universal cover of $\mathbb{B}_L$ are naturally labelled with the integers (consider the height function $f$, mentioned above). The branching is such that the ascending and descending links of branching vertices are isomorphic to $M$.
\end{definition}

The existence of such a branched cover is Theorem 9.1 in \cite{uncountably}. For the rest of the paper, let $m>3$ be an integer and $k$ a prime number. We will focus on the case when $L$ is the $m$-vertex flag triangulation of the circle, and $M$ is the $mk$-vertex flag triangulation of the circle. In Section \ref{sec:4}, we will give specific names to vertices, edges and squares in $X_L^M$ in this case.

When the group does not coincide with a Bestvina-Brady group, it does not act specially on its associated $\text{CAT}\left(0\right)$ complex (using language from \cite{coxeter}, see Definition 3.4 there), because of the action of the point stabilisers at the branching vertices. Indeed, those where this complex is locally finite (call these groups ``of finite ramification'') contain torsion, hence cannot be subgroups of RAAGs.

\begin{definition}[Finite Ramification]
When the group of deck transformations of $M\to L$ is finite, we say that $G_L^M(S)$ is of finite ramification (i.e. $[\pi_1(L):\pi_1(M)]<\infty$). To avoid this group coinciding with a Bestvina-Brady group, we insist that $S\neq\mathbb{Z}$ and $\pi_1(L)/\pi_1(M)$ is not the trivial group.
\end{definition}

In particular, this means that we study the case when $L$ is not simply connected and $M\neq L$. Note that when $M$ is the universal cover of $L$ and $\pi_1(L)$ is finite, we get finite ramification of the group in the main statement of \cite{uncountably} without any modification, as long as $S\neq\mathbb{Z}$. We have decided to exclude Bestvina-Brady groups from this definition, because they are special by definition, and we are interested in new virtually special groups.

\hypertarget{theset}{Let} $\mathfrak{R}$ be the set of generalised Bestvina-Brady groups of finite ramification which are virtually special, up to isomorphism. We conclude this section by showing that $\mathfrak{R}$ is countably infinite.

\begin{theorem}
\label{crit}
Assume $G_{L}^M(S)$ is of finite ramification, then it is virtually torsion-free only if $S$ is periodic.
\end{theorem}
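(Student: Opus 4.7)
I would prove the contrapositive: assume $G := G_L^M(S)$ is virtually torsion-free, and deduce that $S$ is periodic. Choose a torsion-free normal subgroup $H \trianglelefteq G$ of finite index (passing to the core if necessary). The $G$-action on $X := X_L^M(S)$ has trivial point stabilisers except at branching vertices, where they are isomorphic to the finite group $F := \pi_1(L)/\pi_1(M)$, so $H$ acts freely on $X$. Consequently $Y := X/H$ is a cube complex, $Y \to \mathbb{B}_L = X/G$ is a finite-sheeted (branched) covering of degree $[G:H]$, and since $X$ is CAT(0) and hence simply connected, $X \to Y$ is the universal covering of $Y$.

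The base $\mathbb{B}_L$ is itself the $\mathbb{Z}$-cover of the finite Salvetti complex $SC_L$ corresponding to the height map $A_L \twoheadrightarrow \mathbb{Z}$, so it fits in the short exact sequence $1 \to BB_L \to A_L \to \mathbb{Z} \to 1$. Because $L$ is connected, $BB_L$ is finitely generated and has only finitely many subgroups of each finite index; the outer action of $\mathbb{Z} = A_L/BB_L$ on $BB_L$ permutes the finitely many index-$[G:H]$ subgroups, so some positive integer $N$ stabilises the subgroup corresponding to $Y$ up to inner automorphism. This produces a self-isomorphism $\tilde\tau_N$ of $Y$ that covers the translation-by-$N$ deck transformation $\tau_N$ of $\mathbb{B}_L \to SC_L$.

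Since $X \to Y$ is the universal cover, $\tilde\tau_N$ lifts to a self-isomorphism $\hat\tau_N$ of $X$ covering $\tau_N$ on $\mathbb{B}_L$. Every cube complex automorphism preserves vertex links, and by the construction of $X_L^M(S)$ a vertex of $X$ is a branching vertex exactly when its ascending (equivalently descending) link is isomorphic to $M$ rather than $L$; since $M \neq L$ under the finite ramification hypothesis, $\hat\tau_N$ must permute the branching vertices of $X$. Combined with $\tau_N$ translating heights by $N$ and with the fact that branching occurs exactly over heights $h \notin S$, this forces $h \notin S \Leftrightarrow h+N \notin S$, so $S + N = S$ and $S$ is periodic.

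\textbf{Main obstacle.} The core step is producing $N$ in the second paragraph, which rests on finite generation of $BB_L$ (true because $L$ is connected) and on the well-definedness of the $\mathbb{Z}$-action on finite-index subgroups of $BB_L$. Once $N$ is in hand, lifting the symmetry through the universal cover and reading off periodicity is largely bookkeeping, using simple-connectedness of $X$ and the link characterisation of branching vertices.
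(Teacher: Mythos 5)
Your proof takes a geometric route that differs substantially from the paper's argument, and it has a genuine gap at its central step: the passage from finite index to a periodicity constant $N$.

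The problem is that $Y := X/H \to \mathbb{B}_L = X/G$ is a \emph{branched} covering, not an honest covering. There is therefore no finite-index subgroup of $BB_L = \pi_1(\mathbb{B}_L)$ that ``corresponds to $Y$'': the Galois correspondence between connected covers and conjugacy classes of subgroups of $\pi_1$ applies only to unbranched covers, and $G$ itself is not a subgroup of $BB_L$ or of $A_L$ but the deck group of a branched cover. To recover an honest cover one must delete the branch locus $B$ (the vertices of $\mathbb{B}_L$ at heights outside $S$) and work with $\pi_1(\mathbb{B}_L\setminus B)$. But when $S$ is not periodic the set $\mathbb{Z}\setminus S$ is infinite, so $\pi_1(\mathbb{B}_L\setminus B)$ is not finitely generated, and the pivotal finiteness claim --- that there are only finitely many subgroups of the relevant index, hence some power of the translation stabilises yours --- no longer follows. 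You would be assuming something close to the conclusion (periodicity of $S$) to obtain the finiteness that is supposed to produce $N$. The remainder of your argument (lifting through the simply connected $X$, and detecting branch vertices from their ascending and descending links being $M$ rather than $L$) is fine once $N$ and the symmetry $\tilde\tau_N$ exist, but those do not come for free.

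The paper's proof is more elementary and avoids the branched geometry entirely. Given a torsion-free $H < G := G_L^M(S)$ of finite index $r$, it considers the permutation representation $\phi : G \to S_r$ on $G/H$, so that $\ker\phi \leqslant H$ is torsion-free. For each $i$, the element $a_1^i\cdots a_m^i$ lies in $\ker\phi$ exactly when $i\in S$: for $i\notin S$ it is a nontrivial torsion element of prime order $p$ and cannot die in a torsion-free kernel. Since $S_r$ is finite and each $\phi(a_j)$ has finite order, the sequence $i\mapsto \phi(a_1)^i\cdots\phi(a_m)^i$ is periodic, hence so is the sequence of orders $o\bigl(\phi(a_1)^i\cdots\phi(a_m)^i\bigr)\in\{1,p\}$, and this is precisely the indicator of $S$. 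This yields periodicity of $S$ directly, with no need to analyse the branched covering $Y\to\mathbb{B}_L$.
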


\begin{proof}
Assume that $G_{L}^M(S)$ is virtually torsion-free. Then there exists $H\dot{<}G_{L}^M(S)$ which is torsion-free, and in particular $H\not=G_{L}^M(S)$ because $S\not=\mathbb{Z}$, as we are assuming the group is not a Bestvina-Brady group. This implies that there is torsion in $G_{L}^M(S)$, coming from finite point stabilisers, as the group of deck transformations $M\to L$ is finite and non-trivial. Let $X=G_{L}^M(S)/H$ denote the set of left cosets of $H$ in $G_{L}^M(S)$. There is a left $G_{L}^M(S)$-action on $X$:
$$
G_{L}^M(S)\times X\to X\;,\;\;g'\cdot(gH)=(g'g)H\;,\;\;\forall\;g',g\in G_{L}^M(S).
$$
Because $H$ is of finite index in $G_{L}^M(S)$, we have $r=|X|<\infty$. The action gives a homomorphism
$$
\phi:G_{L}^M(S)\to Sym_X\cong S_r.
$$
Since $gH=H\implies g\in H$, we get $\ker{\phi}\leqslant H$. 

Let $p$ be a prime number dividing the order of the group of deck transformations $M\to L$ and let $\gamma$ be a loop in $L$ whose representative in this group has order $p$. Choose an edge loop $a_1,\dots,a_m$ in $L$ corresponding to $\gamma$.

Utilising the presentation of Definition 1.1 in \cite{uncountably}, we get group generators from edges, so consider $\phi(a_1),\ \dots,\ \phi(a_m)$. These are finite group elements in the finite group $S_r$. As $\phi$ is a homomorphism, denoting the order of an element $g$ by $o(g)$,we get the infinite sequence
$$
J=\ldots,\ o\left(\phi(a_1)^{-1}\cdots\phi(a_m)^{-1}\right),\ o\left(\phi(a_1)^0\cdots\phi(a_m)^0\right)=1,
$$
$$
o\left(\phi(a_1)\cdots\phi(a_m)\right),\ o\left(\phi(a_1)^2\cdots\phi(a_m)^2\right),\dots
$$
This is now a periodic sequence with each term in $\{1,p\}$, with some finite period. Now $H$ being torsion-free implies that $\ker{\phi}$ is also torsion-free. For $i\not\in S$, $a_1^i \cdots a_m^i$ is a torsion element, and hence cannot belong to $\ker{\phi}$, therefore $o\left(\phi(a_1)^i\cdots\phi(a_m)^i\right)=p$. But for $i\in S$, we have $a^i\cdots a_m^i=1$, and since $\phi$ is a homomorphism, $o\left(\phi(a_1)^i\cdots\phi(a_m)^i\right)=1$ too. This means that if $S$ is not periodic, then $J$ is not periodic. Hence
$$
G_{L}^M(S)\ \text{virtually torsion-free}\implies J\ \text{periodic}\implies S\ \text{periodic}.
$$
\end{proof}

Since there are countably many periodic subsets of $\mathbb{Z}$, and countably many finite connected flag complexes, there are at most countably many generalised Bestvina-Brady groups of finite ramification which are virtually torsion-free.  Groups which are virtually special must be virtually torsion-free. Hence $\mathfrak{R}$ is at most countable. It is not finite because Theorem \ref{thm:main} provides infinitely many examples (see Section \ref{sec:7} for why they are pairwise non-isomorphic).

\section{Fundamental Domain}
\label{sec:4}

We apply Definition \ref{pre:2} to $L$ being an $m$-vertex triangulation of the circle and $M$ being an $mk$-triangulation of the circle. We refer to the resulting group as $G_m^k$, and to the branched cover as $X_m^k$. Using the presentation in \cite{uncountably}, we get generators $x_1,\dots,x_m$ and relations as stated in Theorem \ref{thm:main}.

The group $G_m^k$ acts (on the left) on the square complex $X_m^k$ which admits a height function $f:X_m^k\to\mathbb{R}$ such that vertices of the complex lie at integer heights. There is one orbit of vertices at each height $i$, with a distinguished base vertex $X^i$, which has stabiliser $\left<x_1^ix_2^i\cdots x_m^i\right>$. Other vertices will have stabilisers being the appropriate conjugate of this.

Every edge joins two vertices of heights differing by 1. The edges are labelled by the heights of the top vertex. There are $m$ free orbits of edges at each height, with distinguished orbit representatives $\mathbf{u}^i_j$ at height $i$ for $1\leqslant j\leqslant m$, such that $\mathbf{u}_1^i$ joins $X^{i-1}$ to $X^i$. Every square has a top vertex of height $i+1$, a bottom vertex of height $i-1$ and two vertices of height $i$, with such a square being labelled with height $i$. Denote by $\text{Orb}\left(\mathbf{u}\right)$ the orbit of an edge. There are $m$ free orbits of squares at each height, with distinguished orbit representatives $\underline{\mathbf{s}}^i_j$ at height $i$ for $1\leqslant j\leqslant m$. They are chosen such that $\underline{\mathbf{s}}^i_j$ contains edges in $\text{Orb}\left(\mathbf{u}_j^{i+1}\right)$, $\text{Orb}\left(\mathbf{u}_{j+1}^{i+1}\right)$, $\text{Orb}\left(\mathbf{u}_j^{i}\right)$, $\text{Orb}\left(\mathbf{u}_{j+1}^{i}\right)$, in that order, having picked an appropriate direction to read around the square (using cyclic indexing, which will also be used later on, which means that $j+1$ denotes 1 for $j=m$, for example). In order to complete the convention for labelling the edges and squares, we use Figure \ref{fig:l1} to order the edges and squares. Note that the labels on the edges in the link refer to which squares in the complex $X_m^k$ contribute towards this.

\begin{figure}[h!]
\centering
\begin{tikzpicture}
\coordinate (left) at (0,0);
\coordinate (mid1) at (2.5cm,0);
\coordinate (mid2) at (5cm,0);
\coordinate (right) at (7.6cm,0);
\coordinate (rightest) at (10.7cm,0);
\draw[very thick] (left) -- node[below] {$\underline{\mathbf{s}}_1^i$} (mid1);
\draw[very thick] (rightest) -- node[below left] {$\underline{\mathbf{s}}_m^i$} (right);
\draw[very thick] (mid2) -- node[below] {$\underline{\mathbf{s}}_{m-1}^i$} (right);
\draw[very thick,dashed] (mid1) -- (mid2);
\draw (left) node [draw, fill=gray!12,rounded corners] {$\mathbf{u}_1^i$};
\draw (mid1) node [draw,fill=gray!12,rounded corners] {$\mathbf{u}_2^i$};
\draw (mid2) node [draw,fill=gray!12,rounded corners] {$\mathbf{u}_{m-1}^i$};
\draw (right) node [draw,fill=gray!12,rounded corners] {$\mathbf{u}_{m}^i$};
\draw (rightest) node [draw,fill=gray!12,rounded corners] {$x_1^i\cdots x_m^i\cdot\mathbf{u}_{1}^i$};
\end{tikzpicture}
\caption{Part of the descending link of $X^i$.}
\label{fig:l1}
\end{figure}
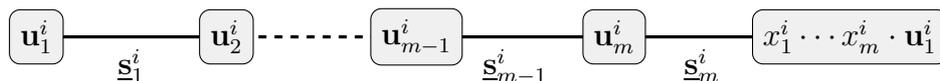

Note further that in Figure \ref{fig:l1}, when $i\in S$, the element $x_1^i\cdots x_m^i$ is the identity, so this forms a complete loop, isomorphic to $L$.

For a vertex of non-0 height, define the shadow to be the set of vertices of height 0 which can be reached by moving along connected edges, such that the height always strictly decreases/increases to 0. Vertex $X^i$, for positive $i$, has the following vertices on the boundary of its shadow: (see Lemma 14.3 in \cite{uncountably})

\begin{align*}
 & & & & & & X^0 &,\\
 x_1\cdot X^0&,& x_1^2\cdot X^0&,& \dots&\ ,& x_1^i\cdot X^0&,\\
  x_1^i x_2\cdot X^0&,& x_1^i x_2^2\cdot X^0&,& \dots&\ ,& x_1^i x_2^i\cdot X^0&,\\
  & & \vdots& & & & & \\
  x_1^i x_2^i \cdots x_{m-1}^i x_m\cdot X^0&,& x_1^i x_2^i \cdots x_{m-1}^i x_m^2\cdot X^0&,& \dots&\ ,& x_1^i x_2^i\cdots x_m^i\cdot X^0&
\end{align*}

\bigskip

in that order (when reading in an appropriate direction). Note that when $i\in k\mathbb{Z}$, then $x_1^i x_2^i \cdots x_m^i$ is the identity element, hence $x_1^i x_2^i\cdots x_m^i\cdot X^0=X^0$ and the elements above are a complete loop. Vertex $X^i$, for negative $i$, has the following vertices on the boundary of its shadow:

\begin{align*}
 & & & & & & X^0 &,\\
 x_1^{-1}\cdot X^0&,& x_1^{-2}\cdot X^0&,& \dots&\ ,& x_1^i\cdot X^0&,\\
  x_1^i x_2^{-1}\cdot X^0&,& x_1^i x_2^{-2}\cdot X^0&,& \dots&\ ,& x_1^i x_2^i\cdot X^0&,\\
  & & \vdots& & & & & \\
  x_1^i x_2^i \cdots x_{m-1}^i x_m^{-1}\cdot X^0&,& x_1^i x_2^i \cdots x_{m-1}^i x_m^{-2}\cdot X^0&,& \dots&\ ,& x_1^i x_2^i\cdots x_m^i\cdot X^0&.
\end{align*}

These can be thought of as the boundaries of the base of a pyramid with base at height 0 and apex vertex $X^i$. The faces of the pyramid are parts of embedded planes in $X_m^k$, consisting of two types of edges each. When $i\in S$, this pyramid will have $m$ faces and it will have $mk$ faces otherwise.

\begin{lemma}
\label{funlemma}

Given the above notational conventions, the complex $X^k_m$ and the action of $G_m^k$ on it can be fully described in Figure \ref{fig:bigfun}.

\begin{figure}[h]
\makebox[\linewidth][c]{%
\centering
\subfigure[$1\leqslant j<m$]{
\begin{tikzpicture}
\coordinate (bottom) at (0,-2cm);
\coordinate (top) at (0,2cm);
\coordinate (left) at (-2cm,0);
\coordinate (right) at (2cm,0);
\fill[gray!20] (bottom) -- (left) -- (top) -- (right) -- cycle;
\draw[very thick] (bottom) -- node[below left] {$\alpha(i,j-1)\cdot\mathbf{u}_{j+1}^i$} (left);
\draw[very thick] (bottom) -- node[below right] {$\alpha(i,j)\cdot \mathbf{u}_j^i$} (right);
\draw[very thick] (right) -- node[above right] {$\mathbf{u}_{j+1}^{i+1}$} (top);
\draw[very thick] (left) -- node[above left] {$\mathbf{u}_j^{i+1}$} (top);
\draw[very thick,dashed] (left) -- node[above] {$\underline{\mathbf{s}}^i_j$} (right);
\draw (bottom) node [draw, fill=gray!12,rounded corners] {$\beta(i,j)\cdot X^{i-1}$};
\draw (left) node [draw, fill=gray!12,rounded corners] {$\alpha(i,j-1)\cdot X^i$};
\draw (right) node [draw, fill=gray!12,rounded corners] {$\alpha(i,j)\cdot X^i$};
\draw (top) node [draw, fill=gray!12,rounded corners] {$X^{i+1}$};
\end{tikzpicture}
}
\subfigure[$j=m$]{
\begin{tikzpicture}
\coordinate (bottom) at (0,-2cm);
\coordinate (top) at (0,2cm);
\coordinate (left) at (-2cm,0);
\coordinate (right) at (2cm,0);
\fill[gray!20] (bottom) -- (left) -- (top) -- (right) -- cycle;
\draw[very thick] (bottom) -- node[below left] {$\gamma(i,m)x_m^{-1}\cdot \mathbf{u}_{1}^i$} (left);
\draw[very thick] (bottom) -- node[below right] {$\alpha(i,m)\cdot \mathbf{u}_m^i$} (right);
\draw[very thick] (right) -- node[above right] {$\gamma(i,m)\cdot \mathbf{u}_{1}^{i+1}$} (top);
\draw[very thick] (left) -- node[above left] {$\mathbf{u}_m^{i+1}$} (top);
\draw[very thick,dashed] (left) -- node[above] {$\underline{\mathbf{s}}^i_m$} (right);
\draw (bottom) node [draw, fill=gray!12,rounded corners] {$\beta(i,m)\cdot X^{i-1}$};
\draw (left) node [draw, fill=gray!12,rounded corners] {$\alpha(i,m-1)\cdot X^i$};
\draw (right) node [draw, fill=gray!12,rounded corners] {$\alpha(i,m)\cdot X^i$};
\draw (top) node [draw, fill=gray!12,rounded corners] {$X^{i+1}$};
\end{tikzpicture}
}
}
\caption{$G_m^k$-orbit representatives of 2-cells in $X_m^k$ in layer $i$.}
\label{fig:bigfun}
\end{figure}
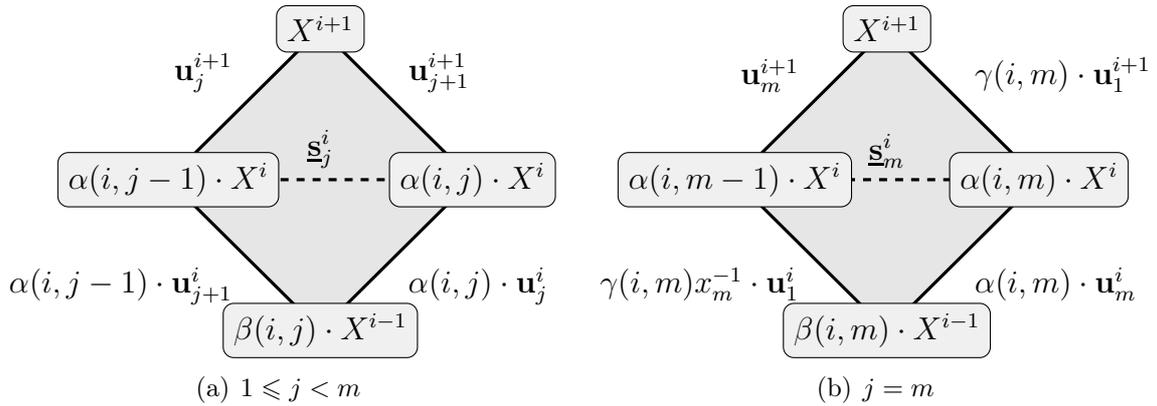

\end{lemma}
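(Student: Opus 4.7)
The plan is to normalise each square orbit so that the top vertex is $X^{i+1}$, and then track the remaining three vertices and four edges of $\underline{\mathbf{s}}_j^i$ as translates of base data by explicit group elements $\alpha$, $\beta$, $\gamma$. Since the $G_m^k$-action on 2-cells is free with exactly $m$ orbits per layer, and since the stabiliser $\langle x_1^{i+1} \cdots x_m^{i+1}\rangle$ of $X^{i+1}$ acts cyclically on its link, requiring the top edges to be the consecutive ascending edges $\mathbf{u}_j^{i+1}, \mathbf{u}_{j+1}^{i+1}$ picks out a unique representative for each orbit $[\underline{\mathbf{s}}_j^i]$.

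First I would define $\alpha(i,j)$ by the requirement that $\alpha(i,j)\cdot X^i$ is the bottom vertex of $\mathbf{u}_{j+1}^{i+1}$; the convention that $\mathbf{u}_1^{i+1}$ joins $X^i$ to $X^{i+1}$ forces $\alpha(i,0)=e$. Explicit formulas for $\alpha(i,j)$ can be read off the shadow data preceding the statement, by traversing the $j$-th face of the pyramid over $X^{i+1}$ from the base at height $0$ up to height $i$: each intermediate vertex is a translate of $X^i$ by a word in $x_1,\dots,x_j$ whose exponents are dictated by the shadow positions. Then $\beta(i,j)$ is determined as the translate sending $X^{i-1}$ to the unique common descending endpoint of the two descending edges issuing from $\alpha(i,j-1)\cdot X^i$ and $\alpha(i,j)\cdot X^i$ that close up $\underline{\mathbf{s}}_j^i$.

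For the bottom edges, I would use that the descending link at $\alpha(i,j)\cdot X^i$ is the $\alpha(i,j)$-translate of the descending link at $X^i$, whose edges are $\mathbf{u}_1^i,\dots,\mathbf{u}_m^i$ in cyclic order as in Figure \ref{fig:l1}. Matching cyclic positions in the two links shows that the bottom-right edge of $\underline{\mathbf{s}}_j^i$ must be $\alpha(i,j)\cdot\mathbf{u}_j^i$, and the same argument shifted by one gives the bottom-left edge as $\alpha(i,j-1)\cdot\mathbf{u}_{j+1}^i$. This settles Figure \ref{fig:bigfun}(a) for $1\leqslant j<m$.

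The hard case is $j=m$, where the top-right edge must reconnect cyclically with $\mathbf{u}_1^{i+1}$ rather than appear as a fresh $\mathbf{u}_{m+1}^{i+1}$, so an element $\gamma(i,m)$ from the stabiliser of $X^{i+1}$ is required to realise the wrap-around. When $i+1\in k\mathbb{Z}$ this stabiliser is trivial and $\gamma(i,m)=e$, but when $i+1\notin k\mathbb{Z}$ the stabiliser is cyclic of order $k$ and $\gamma(i,m)$ must be chosen as an appropriate power of $x_1^{i+1}\cdots x_m^{i+1}$ so that the closure of $\underline{\mathbf{s}}_m^i$ is compatible with the $k$-fold branching of $M\to L$. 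The main obstacle will be verifying this wrap-around simultaneously on all four sides of $\underline{\mathbf{s}}_m^i$ and across all heights $i$; this amounts to a bookkeeping calculation using the presentation from Theorem \ref{thm:main}, and is the content that is relegated to the appendix.
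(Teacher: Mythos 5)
Your overall strategy (normalise the top vertex to $X^{i+1}$, read $\alpha(i,j)$ off the shadow of the pyramid over $X^{i+1}$, then fill in bottom edges and bottom vertex) is broadly in the spirit of the paper, but the key step is underdetermined as stated and would not close without further input. The issue is in the claim that ``matching cyclic positions in the two [descending] links'' pins down the bottom-right edge as $\alpha(i,j)\cdot\mathbf{u}_j^i$. When $i$ is a branching layer (i.e.\ $i\notin k\mathbb{Z}$), the descending link of $\alpha(i,j)\cdot X^i$ is $M$, an $mk$-cycle, so the orbit $[\mathbf{u}_j^i]$ contributes $k$ distinct descending edges at that vertex, of the form $\alpha(i,j)\left(x_1^i\cdots x_m^i\right)^\ell\cdot\mathbf{u}_j^i$ for $0\le\ell<k$. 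Knowing only the descending-link combinatorics, and even knowing the orbit of the bottom-right edge (which is indeed fixed by the labelling convention), does not tell you which of these $k$ edges sits in $\underline{\mathbf{s}}_j^i$. The paper resolves exactly this ambiguity by passing to the \emph{full} link of the side vertex and using that it is a spherical double (so that which ascending edge pairs with which descending edge across a square is determined), and the whole argument is organised as a double induction (outer on $j$, inner on $|i|$, split into positive and negative heights) so that squares in lower layers are available when determining higher ones. Your plan collapses this to a single ``read it off'' step, which works for non-branching layers such as $i=0$ but is genuinely incomplete at branching layers.

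A secondary but related point: defining $\beta(i,j)$ as ``the translate sending $X^{i-1}$ to the unique common descending endpoint of the two descending edges'' presupposes those two bottom edges are already identified, which is precisely the step above; and the bottom vertex $\beta(i,j)\cdot X^{i-1}$ is not on the boundary of the pyramid over $X^{i+1}$, so it cannot be read straight off the shadow data either — you need the pieced-together square structure of the pyramid face, which is what the induction supplies. Finally, the phrase ``$\gamma(i,m)$ must be chosen as an appropriate power'' undersells the content of the $j=m$ case: the lemma asserts the specific value $\gamma(i,m)=x_1^{i+1}\cdots x_m^{i+1}$, and showing it is this power (and not another) again needs the link analysis, not just consistency of the wrap-around. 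In short, the shadow computations you propose give $\alpha(i,j)$ correctly, but the bottom edges, $\beta(i,j)$, and the $j=m$ coefficient all require the full-link/spherical-double input and an induction to be pinned down.
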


In Figure \ref{fig:bigfun}, the following shorthand is used: (note that here, $x_i$ for $i<1$ should be thought of as the identity element, for example $\alpha(i,0)$ is just the identity element)
$$
\alpha(i,j):= x_1^{i+1}\cdots x_{j-1}^{i+1}\ x_j\ x_{j-1}^{-i}\cdots x_1^{-i}
$$
$$
\beta(i,j):= x_1^{i+1}\cdots x_{j-1}^{i+1}\ x_j\ x_{j-1}^{1-i}\cdots x_1^{1-i}
$$
$$
\gamma(i,j):= x_1^{i+1}\cdots x_j^{i+1}
$$

The proof of this involves unwrapping the notation by induction and can be found in the \hyperlink{apx}{appendix}. Much of the proof of Theorem \ref{thm:main} will involve translating around patches of the fundamental domain to understand the structure of hyperplanes, using the fact that edges or squares of a fixed height and type are in free orbit.

\section{Hyperplane Stabilisers in a Quotient}
\label{sec:5}

We show that $G_m^k$ is virtually torsion-free by exhibiting an explicit surjection onto a finite group with torsion-free kernel as follows:

\begin{align*}
\phi_m^k:G_m^k&\to \bar{G}_m^k\cong \overbrace{ C_k \times \cdots \times C_k}^{m\ \text{copies}}\\
x_i&\mapsto (0,\dots,0,\vertarrowbox[3ex]{\sigma_i}{$i$th position},0,\dots,0)
\end{align*}

This is a homomorphism since each $\sigma_i$ has order $k$. The only torsion elements come from point stabilisers of the action on $X_m^k$, which are conjugates of the torsion elements in the presentation (or their powers). Since these do not map to the identity (neither do their powers, as $k$ is prime) in $\bar{G}^k_m$, we get that $\ker{\phi^k_m}$ is a torsion-free subgroup of $G_m^k$ of index $mk$. Note that $\bar{G}_m^k$ is abelian. We could have used a smaller target group to get a torsion-free kernel, however this larger group will be useful for defining linear characters later. If $k$ were not prime, this would be false, as then an intermediate power of a torsion element (which is a torsion element itself) would still lie in the kernel, due to some factorisation of $k$.

We can now turn our attention to the quotient complex $\bar{X}_m^k:=X_m^k/\ker{\phi_m^k}$. We denote images by placing a bar over the notation. From Lemma \ref{funlemma}, we obtain the new fundamental domain in Figure \ref{fig:smallfun}.

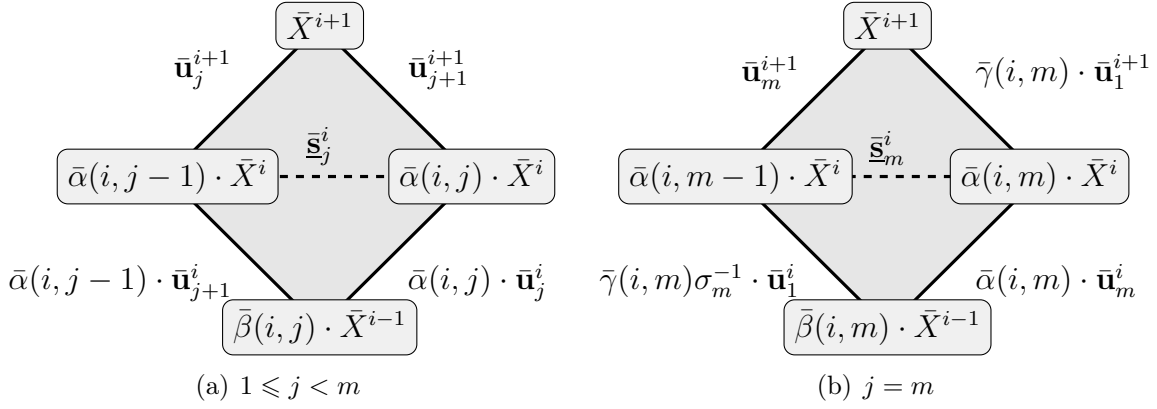
\begin{figure}[h!]
\makebox[\linewidth][c]{%
\centering
\subfigure[$1\leqslant j<m$]{
\begin{tikzpicture}
\coordinate (bottom) at (0,-2cm);
\coordinate (top) at (0,2cm);
\coordinate (left) at (-2cm,0);
\coordinate (right) at (2cm,0);
\fill[gray!20] (bottom) -- (left) -- (top) -- (right) -- cycle;
\draw[very thick] (bottom) -- node[below left] {$\bar{\alpha}(i,j-1)\cdot\bar{\mathbf{u}}_{j+1}^i$} (left);
\draw[very thick] (bottom) -- node[below right] {$\bar{\alpha}(i,j)\cdot \bar{\mathbf{u}}_j^i$} (right);
\draw[very thick] (right) -- node[above right] {$\bar{\mathbf{u}}_{j+1}^{i+1}$} (top);
\draw[very thick] (left) -- node[above left] {$\bar{\mathbf{u}}_j^{i+1}$} (top);
\draw[very thick,dashed] (left) -- node[above] {$\underline{\bar{\mathbf{s}}}^i_j$} (right);
\draw (bottom) node [draw, fill=gray!12,rounded corners] {$\bar{\beta}(i,j)\cdot \bar{X}^{i-1}$};
\draw (left) node [draw, fill=gray!12,rounded corners] {$\bar{\alpha}(i,j-1)\cdot \bar{X}^i$};
\draw (right) node [draw, fill=gray!12,rounded corners] {$\bar{\alpha}(i,j)\cdot \bar{X}^i$};
\draw (top) node [draw, fill=gray!12,rounded corners] {$\bar{X}^{i+1}$};
\end{tikzpicture}
}
\subfigure[$j=m$]{
\begin{tikzpicture}
\coordinate (bottom) at (0,-2cm);
\coordinate (top) at (0,2cm);
\coordinate (left) at (-2cm,0);
\coordinate (right) at (2cm,0);
\fill[gray!20] (bottom) -- (left) -- (top) -- (right) -- cycle;
\draw[very thick] (bottom) -- node[below left] {$\bar{\gamma}(i,m)\sigma_m^{-1}\cdot \bar{\mathbf{u}}_{1}^i$} (left);
\draw[very thick] (bottom) -- node[below right] {$\bar{\alpha}(i,m)\cdot \bar{\mathbf{u}}_m^i$} (right);
\draw[very thick] (right) -- node[above right] {$\bar{\gamma}(i,m)\cdot \bar{\mathbf{u}}_{1}^{i+1}$} (top);
\draw[very thick] (left) -- node[above left] {$\bar{\mathbf{u}}_m^{i+1}$} (top);
\draw[very thick,dashed] (left) -- node[above] {$\underline{\bar{\mathbf{s}}}^i_m$} (right);
\draw (bottom) node [draw, fill=gray!12,rounded corners] {$\bar{\beta}(i,m)\cdot \bar{X}^{i-1}$};
\draw (left) node [draw, fill=gray!12,rounded corners] {$\bar{\alpha}(i,m-1)\cdot \bar{X}^i$};
\draw (right) node [draw, fill=gray!12,rounded corners] {$\bar{\alpha}(i,m)\cdot \bar{X}^i$};
\draw (top) node [draw, fill=gray!12,rounded corners] {$\bar{X}^{i+1}$};
\end{tikzpicture}
}
}
\caption{$\bar{G}_m^k$-orbit representatives of 2-cells in $\bar{X}_m^k$ in layer $i$.}
\label{fig:smallfun}
\end{figure}

In Figure \ref{fig:smallfun}, the following shorthand is used:
$$
\bar{\alpha}(i,j):= \phi_m^k\left(\alpha(i,j)\right)=\sigma_1\cdots\sigma_j
$$
$$
\bar{\beta}(i,j):= \phi_m^k\left(\beta(i,j)\right)=\sigma_1^2\cdots\sigma_{j-1}^2\sigma_j
$$
$$
\bar{\gamma}(i,j):= \phi_m^k\left(\gamma(i,j)\right)=\sigma_1^{i+1}\cdots\sigma_j^{i+1}
$$

Given a hyperplane $[\mathbf{u}]$, we define the hyperplane stabiliser as
$$
\text{Stab}\left([\mathbf{u}]\right):=\left\{g\in \bar{G}_m^k\ \middle|\ g\cdot\mathbf{u}\sim\mathbf{u}\right\}.
$$

We can compute the hyperplane stabilisers by observing which $\bar{G}_m^k$-coefficients of an edge type in a particular layer lie in the same hyperplane. This is done by first observing that in order to ``move'' to a different layer and remain in the same hyperplane (for edge $\bar{\mathbf{u}}_j^i$, say), we must utilise squares $\underline{\bar{\mathbf{s}}}_j$ and $\underline{\bar{\mathbf{s}}}_{j-1}$ (using cyclic indexing), as they are the only ones to contain an edge of this type. We can think of the hyperplane stabilisers as fundamental groups of loop spaces where the vertices represent heights and edge labels on the loop space represent how the coefficient changes when ``jumping'' across a square, as shown in Figure \ref{fig:loops}. These edge labels are calculated from the coefficients on the edges in Figure \ref{fig:smallfun}. The direction of the arrow shows the direction in which the edge label is multiplied (take the inverse for the opposite direction). For example, by looking at the top left and bottom right edges of square $g\cdot\underline{\bar{\mathbf{s}}}_1^i$, we can deduce that using this square to drop down to layer $i$, the coefficient of edge $g\cdot\bar{\mathbf{u}}_1^{i+1}$ will be multiplied by $\bar{\alpha}(i,1)=\sigma_1$, resulting in the edge $g\sigma_1\cdot \bar{\mathbf{u}}_1^i$, which is still in the same hyperplane as $g\cdot\bar{\mathbf{u}}_1^{i+1}$. Note that this multiplication should be thought of as occurring on the right, but $\bar{G}_m^k$ is abelian, so we do not need to worry about this.

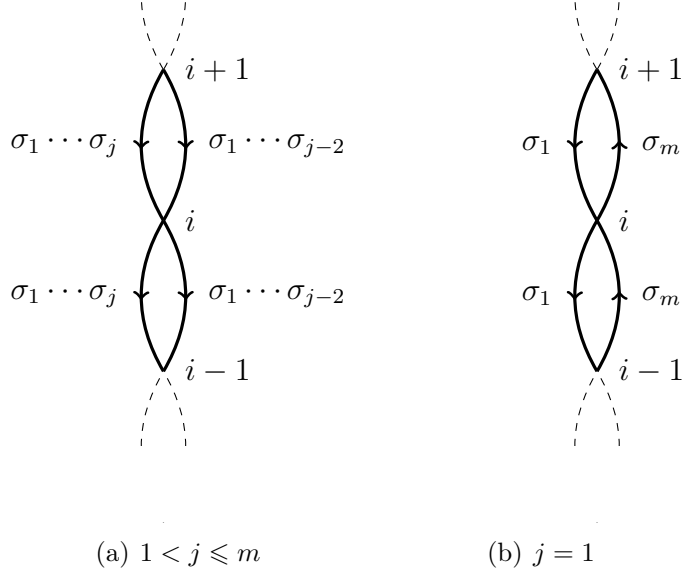
\begin{figure}[h!]
\makebox[\linewidth][c]{
\centering
\subfigure[$1<j\leqslant m$]{
\begin{tikzpicture}
\coordinate (U1) at (0,0cm);
\coordinate (U2) at (0,2cm);
\coordinate (U3) at (0,4cm);
\coordinate (U4) at (0,6cm);
\coordinate (U5) at (0,8cm);
\coordinate (Us1) at (1cm,1cm);
\coordinate (Us2) at (-1cm,1cm);
\coordinate (Us3) at (-1cm,0cm);
\coordinate (Us4) at (1cm,0cm);
\coordinate (Ut1) at (-1cm,8cm);
\coordinate (Ut2) at (1cm,8cm);
\coordinate (Ut3) at (1cm,7cm);
\coordinate (Ut4) at (-1cm,7cm);
\draw[dashed] (U1) to[bend right] (U2);
\draw[dashed] (U1) to[bend left] (U2);
\draw[very thick,middlearrow={<}] (U2) to[bend right] node[right] {$\ \sigma_1\cdots\sigma_{j-2}$} (U3);
\draw[very thick,middlearrow={<}] (U2) to[bend left] node[left] {$\sigma_1\cdots\sigma_j\ $} (U3);
\draw[very thick,middlearrow={<}] (U3) to[bend right] node[right] {$\ \sigma_1\cdots\sigma_{j-2}$} (U4);
\draw[very thick,middlearrow={<}] (U3) to[bend left] node[left] {$\sigma_1\cdots\sigma_j\ $} (U4);
\draw[dashed] (U4) to[bend right] (U5);
\draw[dashed] (U4) to[bend left] (U5);
\fill[white] (Us1) -- (Us2) -- (Us3) -- (Us4) -- cycle;
\fill[white] (Ut1) -- (Ut2) -- (Ut3) -- (Ut4) -- cycle;
\draw (U2) node[right] {$\ i-1$};
\draw (U3) node[right] {$\ i$};
\draw (U4) node[right] {$\ i+1$};
\end{tikzpicture}
}
\subfigure[$j=1$]{
\begin{tikzpicture}
\coordinate (U1) at (0,0cm);
\coordinate (U2) at (0,2cm);
\coordinate (U3) at (0,4cm);
\coordinate (U4) at (0,6cm);
\coordinate (U5) at (0,8cm);
\coordinate (Us1) at (1cm,1cm);
\coordinate (Us2) at (-1cm,1cm);
\coordinate (Us3) at (-1cm,0cm);
\coordinate (Us4) at (1cm,0cm);
\coordinate (Ut1) at (-1cm,8cm);
\coordinate (Ut2) at (1cm,8cm);
\coordinate (Ut3) at (1cm,7cm);
\coordinate (Ut4) at (-1cm,7cm);
\draw[dashed] (U1) to[bend right] (U2);
\draw[dashed] (U1) to[bend left] (U2);
\draw[very thick,middlearrow={>}] (U2) to[bend right] node[right] {$\ \sigma_m$} (U3);
\draw[very thick,middlearrow={<}] (U2) to[bend left] node[left] {$\sigma_1\ $} (U3);
\draw[very thick,middlearrow={>}] (U3) to[bend right] node[right] {$\ \sigma_m$} (U4);
\draw[very thick,middlearrow={<}] (U3) to[bend left] node[left] {$\sigma_1\ $} (U4);
\draw[dashed] (U4) to[bend right] (U5);
\draw[dashed] (U4) to[bend left] (U5);
\fill[white] (Us1) -- (Us2) -- (Us3) -- (Us4) -- cycle;
\fill[white] (Ut1) -- (Ut2) -- (Ut3) -- (Ut4) -- cycle;
\draw (U2) node[right] {$\ i-1$};
\draw (U3) node[right] {$\ i$};
\draw (U3) node[left] {$\ \ \ \ \ \ \ \ \ \ \ \ \ \ \ \ \ \ $};
\draw (U4) node[right] {$\ i+1$};
\end{tikzpicture}
}
}
\caption{Moving between edges of different heights in the same hyperplane of type $\bar{\mathbf{u}}_j$.}
\label{fig:loops}
\end{figure}

From this, (using cyclic indexing) we can determine:

$$
\text{Stab}\left([\bar{\mathbf{u}}_{j}^i]\right)=\left<\sigma_{j-1}\sigma_j\right>.
$$

Note that because squares of a fixed type are in free orbit and our group of coefficients is abelian, it does not matter which particular hyperplane we are in: the stabiliser will only depend on the type of edge. So we can ignore the coefficient or height of an edge when considering its hyperplane stabiliser, which will be mirrored in the notation from now on.

\begin{lemma}
\label{climb}
For integers $a,b$ and elements $g,h\in\bar{G}_m^k$, we have:
$$
g\cdot\bar{\mathbf{u}}_j^a\sim h\cdot\bar{\mathbf{u}}_j^b \implies h \in g\left(\sigma_1\cdots\sigma_j\right)^{a-b} \text{Stab}\left([\bar{\mathbf{u}}_{j}]\right).
$$
\end{lemma}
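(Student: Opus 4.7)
The plan is to unpack the parallelism relation $\sim$ into a finite chain of elementary parallelisms and track how each step alters the $\bar G_m^k$-coefficient of an edge of type $\bar{\mathbf u}_j$. By definition of $\sim$, the hypothesis $g\cdot\bar{\mathbf u}_j^a\sim h\cdot\bar{\mathbf u}_j^b$ gives a chain
\[
g\cdot\bar{\mathbf u}_j^a=e_0,\;e_1,\;\ldots,\;e_N=h\cdot\bar{\mathbf u}_j^b
\]
in which each consecutive pair are opposite edges of a common square. As noted in the paragraph preceding Figure \ref{fig:loops}, the only squares in $\bar X_m^k$ containing edges of type $\bar{\mathbf u}_j$ in two adjacent layers are $\underline{\bar{\mathbf s}}_{j-1}$ and $\underline{\bar{\mathbf s}}_j$ (cyclically), so these are the only squares that can be traversed along the chain. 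Any two consecutive edges in the chain therefore have layers differing by $\pm 1$.

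My next step is to read off the coefficient change for a single elementary step directly from Figure \ref{fig:smallfun}. Comparing the top-left and bottom-right $\bar{\mathbf u}_j$-edges of the square $\underline{\bar{\mathbf s}}_j^i$, a descent from layer $i+1$ to layer $i$ through this square multiplies the coefficient by $\bar\alpha(i,j)=\sigma_1\cdots\sigma_j$; the analogous comparison through $\underline{\bar{\mathbf s}}_{j-1}^i$ gives the multiplier $\sigma_1\cdots\sigma_{j-2}$. The exceptional $j=m$ square (and hence, by cyclic indexing, the case depicted in Figure \ref{fig:loops}(b)) is a direct check of the same shape, using $\bar\gamma(i,m)\sigma_m^{-1}$ in place of $\bar\alpha(i,m-1)$. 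The key structural point is
\[
\sigma_1\cdots\sigma_{j-2}=(\sigma_1\cdots\sigma_j)\cdot(\sigma_{j-1}\sigma_j)^{-1},
\]
which says the two possible descent multipliers differ exactly by the generator of $\text{Stab}([\bar{\mathbf u}_j])$ computed just above the lemma.

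Finally I would assemble the net effect over the whole chain. Let $q$ denote the number of descents and $p$ the number of ascents, so $q-p=a-b$. Each descent contributes a multiplier of the form $(\sigma_1\cdots\sigma_j)\cdot s$ with $s\in\text{Stab}([\bar{\mathbf u}_j])$, and each ascent contributes the inverse of such a multiplier. Because $\bar G_m^k$ is abelian, all these factors commute and the stabiliser pieces can be collected together, so the total multiplier from $g$ to $h$ equals $(\sigma_1\cdots\sigma_j)^{q-p}\cdot S=(\sigma_1\cdots\sigma_j)^{a-b}\cdot S$ for some $S\in\text{Stab}([\bar{\mathbf u}_j])$. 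This gives precisely $h\in g(\sigma_1\cdots\sigma_j)^{a-b}\text{Stab}([\bar{\mathbf u}_j])$. I expect the main obstacle to be purely bookkeeping: one must verify that the $j=m$ square, whose labelling breaks the otherwise uniform pattern, still produces two descent multipliers that agree modulo $\text{Stab}([\bar{\mathbf u}_j])$ with those above. Once this check is made, the abelianness of $\bar G_m^k$ makes the collection of factors automatic.
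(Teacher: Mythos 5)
Your proof is correct and follows essentially the same approach as the paper: both track the coefficient change down the hyperplane by observing that descending one layer multiplies by $\sigma_1\cdots\sigma_j$ or $\sigma_1\cdots\sigma_{j-2}$ (equivalently $\sigma_m^{-1}$ when $j=1$), and that the two choices differ by the generator $\sigma_{j-1}\sigma_j$ of $\text{Stab}([\bar{\mathbf u}_j])$. Where the paper phrases this compactly as "take the left side of the loop space, and any other route differs by an element of the fundamental group, which is the stabiliser," you make the same observation explicit by decomposing the parallelism into elementary steps and collecting the stabiliser factors using that $\bar G_m^k$ is abelian; this is just a more unpacked version of the same argument, including the same cyclic-index check for the exceptional square.
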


\begin{proof}
Consider which $\bar{\mathbf{u}}_j$-type edges lie on layer $b$ and are part of the hyperplane $[g\cdot\bar{\mathbf{u}}_j^a]$. Their $\bar{G}_m^k$-coefficient will be $g$ multiplied by $\left(\sigma_1\cdots\sigma_j\right)^{a-b}$ from moving to layer $b$, by moving along the left side of the loop space in Figure \ref{fig:loops}, and then maybe also moved around in the layer by multiplication by an element from the stabiliser, giving the result. It did not matter that we chose to take the left side, as the stabiliser is the fundamental group of the loop space.
\end{proof}

\section{Linear Characters}
\label{sec:6}

Fix a primitive $k$th root of unity $\mu$ in $\mathbb{C}$. For each $1\leqslant j\leqslant m$, consider the homomorphism:

\begin{align*}
\mathfrak{D}(j): \bar{G}_m^k&\to \mathbb{C}\\
\sigma_i&\mapsto \begin{cases*}
\mu & \text{if }$\ i=j$\\
1 & \text{else}
\end{cases*}
\end{align*}

This defines a linear character. We will show that $\bar{X}_m^k$ is a special cube complex by working with $\text{Ch}(\bar{G}_m^k)$, the abelian group of linear characters (under pointwise multiplication). In particular, we can multiply and take inverses. The main idea is to come up with characters constant on certain sets. Cyclic indexing will again be used in this section, so for example, $\mathfrak{D}(j-1)$ for $j=1$ will refer to $\mathfrak{D}(m)$.

We will proceed by checking each of the 4 conditions for a special cube complex:

\medskip

\textbf{1.} ($\sim$,$\perp$) Every square in $\bar{X}_m^k$ is such that every corner consists of two edges of different types meeting, hence a hyperplane can never cross itself.

\medskip

\textbf{2.} (2-sided) We choose an orientation for each edge by deciding to make each edge point upwards. Now each square takes positively oriented edges to positively oriented edges or negatively oriented edges to negatively oriented edges, so we can never have the situation $\bar{\mathbf{u}}_j^i\sim-\bar{\mathbf{u}}_j^i$.

\medskip

\textbf{3.} ($\sim$,$\circlearrowright$) Next we investigate whether a hyperplane can self-osculate. In order for this to occur, we must have two distinct edges $g\cdot\bar{\mathbf{u}}_j^a$ and $h\cdot\bar{\mathbf{u}}_j^b$ for some integers $a,b$, some $1\leqslant j\leqslant m$ and $g,h\in\bar{G}_m^k$, for which we have
$$
g\cdot\bar{\mathbf{u}}_j^a\sim h\cdot\bar{\mathbf{u}}_j^b\ \ \ \text{and also}\ \ g\cdot \bar{\mathbf{u}}_j^a\circlearrowright h\cdot \bar{\mathbf{u}}_j^b.
$$

We already know from Lemma \ref{climb} that the first condition implies that

$$
h \in g\left(\sigma_1\cdots\sigma_j\right)^{a-b} \text{Stab}\left([\bar{\mathbf{u}}_{j}]\right).
$$

Consider the second condition. Since the two edges in question osculate, they must share a common vertex. This implies that $b\in\{a-1,a,a+1\}$. We consider each in turn:

\medskip

\textbf{Case 1:} $(b=a-1)$ From the fundamental domain in the previous section, we know that edge $\sigma_1^{-1}\cdots\sigma_{j-1}^{-1}\cdot\bar{\mathbf{u}}_j^{i+1}$ is attached to vertex $\bar{X}^{i}$ and so is edge $\bar{\mathbf{u}}_j^i$. By translation, this means that to get at all the edges of type $\bar{\mathbf{u}}_j^{a-1}$ which osculate with $g\cdot\bar{\mathbf{u}}_j^a$, the coefficient is multiplied by $\sigma_1\cdots\sigma_{j-1}$ and also possibly by some stabiliser of the vertex (note that since $\bar{G}_m^k$ is abelian, the stabiliser of a vertex is determined only by its height). This implies that

$$
h=\left(\sigma_1^{a-1}\cdots\sigma_m^{a-1}\right)^c\sigma_1\cdots\sigma_{j-1}g
$$

for some integer $c$. So in order for the self-osculation to be possible, it must be true that

$$
\left(\sigma_1^{a-1}\cdots\sigma_m^{a-1}\right)^c\sigma_1\cdots\sigma_{j-1} \in \left(\sigma_1\cdots\sigma_j\right)^{a-b} \text{Stab}\left([\bar{\mathbf{u}}_{j}]\right).
$$

However, the character $\mathfrak{D}(j-1)\mathfrak{D}(j)^{-1}$ takes the value 1 on the set on the right and takes the value $\mu$ on the element on the left, hence this is not possible.

\medskip

\textbf{Case 2:} $(b=a+1)$ Reasoning similarly to above, we have that, for some integer $c$,

$$
h=\left(\sigma_1^a\cdots\sigma_m^a\right)^c\sigma_1^{-1}\cdots\sigma_{j-1}^{-1}g.
$$

This implies that for self-osculation, we need

$$
\left(\sigma_1^a\cdots\sigma_m^a\right)^c\sigma_1^{-1}\cdots\sigma_{j-1}^{-1} \in \left(\sigma_1\cdots\sigma_j\right)^{a-b} \text{Stab}\left([\bar{\mathbf{u}}_{j}]\right).
$$

However, the character $\mathfrak{D}(j-1)\mathfrak{D}(j)^{-1}$ takes the value 1 on the set on the right and takes the value $\mu^{-1}$ on the element on the left, hence this is not possible.

\medskip

\textbf{Case 3:} $(b=a)$ Here we can have two further possibilities: the two edges could be joined at a vertex of height $a$ or $a-1$. Note that in either case, this requires this vertex to be a branching vertex, hence the respective heights are not divisible by $k$.

In the former case, we get $h$ from $g$ by multiplying by a stabiliser of a vertex of height $a$, so for some integer $c\not\in k\mathbb{Z}$ (because we want the two edges to be distinct) we have:

$$
h=\left(\sigma_1^a\cdots\sigma_m^a\right)^cg.
$$

This implies that for self-osculation, we need

$$
\left(\sigma_1^a\cdots\sigma_m^a\right)^c \in \left(\sigma_1\cdots\sigma_j\right)^{a-b} \text{Stab}\left([\bar{\mathbf{u}}_{j}]\right).
$$

However, since $a=b$, this is the same as asking for $\left(\sigma_1^a\cdots\sigma_m^a\right)^c$ to be in $\left<\sigma_{j-1}\sigma_j\right>$. Since $m>2$, the character $\mathfrak{D}(j+1)$ evaluates to 1 on the stabiliser, however it takes the value $\mu^{ac}$ on $\left(\sigma_1^a\cdots\sigma_m^a\right)^c$. These do not agree, as $k$ is prime and neither of $a$ or $c$ are divisible by $k$.

In the latter case, we get $h$ from $g$ by multiplying by a stabiliser of a vertex of height $a-1$. Similarly, we obtain, for some integer $c\not\in k\mathbb{Z}$,

$$
h=\left(\sigma_1^{a-1}\cdots\sigma_m^{a-1}\right)^cg.
$$

Just as in the previous case, this is the same as asking for $\left(\sigma_1^{a-1}\cdots\sigma_m^{a-1}\right)^c$ to be in $\left<\sigma_{j-1}\sigma_j\right>$. The character $\mathfrak{D}(j+1)$ evaluates to 1 on the stabiliser, but takes the value $\mu^{c(a-1)}$ on the element. These again do not coincide as $k$ is prime and neither of $c$ or $a-1$ are divisible by $k$ in this case.

Therefore we have showed that no hyperplane self-osculates in $\bar{X}_m^k$.

\medskip

\textbf{4.} ($\perp$,$\circlearrowright$) Finally, we investigate whether two hyperplanes $\mathfrak{H}$, $\mathfrak{H}'$ can inter-osculate. In order for this to occur, we must have two distinct hyperplanes which cross. This can only occur between one of type $[\bar{\mathbf{u}}_j]$ and one of type $[\bar{\mathbf{u}}_{j+1}]$ (using cyclic indexing). In fact, since an intersection of hyperplanes can only happen at a square, it does not matter which corner of the square or which edges of that square are considered, since each edge in a pair of parallel edges represents the same hyperplane. Hence without loss of generality we may assume that it is the top corner. Now we proceed according to the two ways this corner can be, according to the fundamental domain in Figure \ref{fig:smallfun} from Section \ref{sec:5}.

\medskip

\textbf{Case 1:} $(1\leqslant j<m)$ In this situation, $\mathfrak{H}\perp\mathfrak{H}'$ comes from edges $g\cdot\bar{\mathbf{u}}_j^a$ and $g\cdot\bar{\mathbf{u}}_{j+1}^a$ for some integer $a$ and $g\in\bar{G}_m^k$. As when dealing with the self-osculation, we now have 4 possible sources of osculation to deal with:

\begin{itemize}
\item \textbf{Sub-case 1.1:} (joined at the top vertex) From the top corner of the fundamental domain we can deduce that osculation comes from edges $h\cdot\bar{\mathbf{u}}_j^b$ and $h\left(\sigma_1^b\cdots\sigma_m^b\right)^c\cdot\bar{\mathbf{u}}_{j+1}^b$, for some integers $b,c\not\in k\mathbb{Z}$ (because we are attached at a branching vertex and the edges are not meeting at the corner of a square, respectively) and some $h\in\bar{G}_m^k$. We now need to check if $h\cdot\bar{\mathbf{u}}_j^b\sim g\cdot\bar{\mathbf{u}}_j^a$ and $h\left(\sigma_1^b\cdots\sigma_m^b\right)^c\cdot\bar{\mathbf{u}}_{j+1}^b\sim g\cdot\bar{\mathbf{u}}_{j+1}^a$ are both possible simultaneously. By Lemma \ref{climb}, this implies:
$$
g\in \left(h\left(\sigma_1\cdots\sigma_j\right)^{b-a}\text{Stab}\left([\bar{\mathbf{u}}_j]\right)\right)\cap \left( h\left(\sigma_1^b\cdots\sigma_m^b\right)^c\left(\sigma_1\cdots\sigma_{j+1}\right)^{b-a}\text{Stab}\left([\bar{\mathbf{u}}_{j+1}]\right) \right),
$$
so we need
$$
\left(\left(\sigma_1\cdots\sigma_j\right)^{b-a}\left<\sigma_{j-1}\sigma_j\right>\right)\cap\left( \left(\sigma_1^b\cdots\sigma_m^b\right)^c\left(\sigma_1\cdots\sigma_{j+1}\right)^{b-a}\left<\sigma_j\sigma_{j+1}\right> \right)
$$
to not be empty, which is the same as
$$
\left<\sigma_{j-1}\sigma_j\right>\cap \left(\sigma_1^b\cdots\sigma_m^b\right)^c\sigma_{j+1}^{b-a}\left<\sigma_j\sigma_{j+1}\right> 
$$
not being empty. However, as $m>3$, the character $\mathfrak{D}(j+2)$ evaluates to 1 on the left set and to $\mu^{bc}\neq1$ on the right set, so the intersection is empty.
\item \textbf{Sub-case 1.2:} (joined at the bottom vertex) From the bottom corner of the fundamental domain we can deduce that osculation comes from edges $h\sigma_j\cdot\bar{\mathbf{u}}_j^{b+1}$ and $h\left(\sigma_1^b\cdots\sigma_m^b\right)^c\cdot\bar{\mathbf{u}}_{j+1}^{b+1}$ for some integers $b,c\not\in k\mathbb{Z}$ and some $h\in\bar{G}_m^k$. Similarly to above, this implies that
$$
\sigma_j\left<\sigma_{j-1}\sigma_j\right>\cap\left(\sigma_1^b\cdots\sigma_m^b\right)^c\sigma_{j+1}^{b-a+1}\left<\sigma_j\sigma_{j+1}\right>
$$
is not empty. However, as $m>3$, the character $\mathfrak{D}(j+2)$ evaluates to 1 on the left set and to $\mu^{bc}\neq1$ on the right set, so the intersection is empty.
\item \textbf{Sub-case 1.3:} ($j$ above $j+1$) From the left corner of the fundamental domain we can deduce that osculation comes from edges $h\left(\sigma_1^b\cdots\sigma_m^b\right)^c\cdot\bar{\mathbf{u}}_j^{b+1}$ and $h\sigma_1\cdots\sigma_{j-1}\cdot\bar{\mathbf{u}}_{j+1}^b$ for some integers $b,c\not\in k\mathbb{Z}$ and some $h\in\bar{G}_m^k$. Similarly to above, this implies that
$$
\left(\sigma_1^b\cdots\sigma_m^b\right)^c\sigma_j\left<\sigma_{j-1}\sigma_j\right>\cap\sigma_{j+1}^{b-a}\left<\sigma_j\sigma_{j+1}\right>
$$
is not empty. However, as $m>3$, the character $\mathfrak{D}(j+2)$ evaluates to 1 on the right set and to $\mu^{bc}\neq1$ on the left set, so the intersection is empty.
\item \textbf{Sub-case 1.4:} ($j$ below $j+1$) From the right corner of the fundamental domain we can deduce that osculation comes from edges $h\left(\sigma_1^b\cdots\sigma_m^b\right)^c\cdot\bar{\mathbf{u}}_{j+1}^{b+1}$ and $h\sigma_1\cdots\sigma_j\cdot\bar{\mathbf{u}}_j^b$ for some integers $b,c\not\in k\mathbb{Z}$ and some $h\in\bar{G}_m^k$. Similarly to above, this implies that
$$
\left(\sigma_1^b\cdots\sigma_m^b\right)^c\sigma_{j+1}^{b-a+1}\left<\sigma_j\sigma_{j+1}\right>\cap\left<\sigma_{j-1}\sigma_j\right>
$$
is not empty. However, as $m>3$, the character $\mathfrak{D}(j+2)$ evaluates to 1 on the right set and to $\mu^{bc}\neq1$ on the left set, so the intersection is empty.
\end{itemize}

\textbf{Case 2:} $(j=m,``j+1"=1)$ In this situation, $\mathfrak{H}\perp\mathfrak{H}'$ comes from edges $g\cdot\bar{\mathbf{u}}_m^a$ and $g\sigma_1^a\cdots\sigma_m^a\cdot\bar{\mathbf{u}}_{1}^a$ for some integer $a$ and $g\in\bar{G}_m^k$. Once again we have 4 possible sources of osculation to deal with:

\begin{itemize}
\item \textbf{Sub-case 2.1:} (joined at the top vertex) From the top corner of the fundamental domain we can deduce that osculation comes from edges $h\left(\sigma_1^b\cdots\sigma_m^b\right)^c\cdot\bar{\mathbf{u}}_m^b$ and $h\sigma_1^b\cdots\sigma_m^b\cdot\bar{\mathbf{u}}_1^b$ for some integers $b,c\not\in k\mathbb{Z}$ and some $h\in\bar{G}_m^k$. From Lemma 5.1, in order for this to occur, we need both of
$$
g\in h\left(\sigma_1^b\cdots\sigma_m^b\right)^c\left(\bar{x_1}\cdots\sigma_m\right)^{b-a}\left<\sigma_{m-1}\sigma_m\right>
$$
and
$$
g\sigma_1^a\cdots\sigma_m^a\in h\sigma_1^b\cdots\sigma_m^b\sigma_1^{b-a}\left<\sigma_1\sigma_m\right>
$$
to hold simultaneously. This is the same as
$$
\sigma_1^{b-a}\left<\sigma_1\sigma_m\right>\cap\left(\sigma_1^b\cdots\sigma_m^b\right)^c\left<\sigma_{m-1}\sigma_m\right>
$$
not being empty. However, since $m>3$, the character $\mathfrak{D}(2)$ evaluates to 1 on the left set and to $\mu^{bc}\neq1$ on the right set, so the intersection is empty.
\item \textbf{Sub-case 2.2:} (joined at the bottom vertex) From the bottom corner of the fundamental domain we can deduce that osculation comes from edges $h\left(\sigma_1^b\cdots\sigma_m^b\right)^c\cdot\bar{\mathbf{u}}_m^{b+1}$ and $h\sigma_1^{b+1}\cdots\sigma_{m-1}^{b+1}\sigma_m^b\cdot\bar{\mathbf{u}}_1^{b+1}$ for some integers $b,c\not\in k\mathbb{Z}$ and some $h\in\bar{G}_m^k$. Similarly to above, this implies that
$$
\left(\sigma_1^b\cdots\sigma_m^b\right)^c\sigma_m\left<\sigma_{m-1}\sigma_m\right>\cap\sigma_1^{b-a+1}\left<\sigma_1\sigma_m\right>
$$
is not empty. However, since $m>3$, the character $\mathfrak{D}(2)$ evaluates to 1 on the right set and to $\mu^{bc}\neq1$ on the left set, so the intersection is empty.
\item \textbf{Sub-case 2.3:} (1 above $m$) From the right corner of the fundamental domain we can deduce that osculation comes from edges $h\sigma_1^{b+1}\cdots\sigma_m^{b+1}\cdot\bar{\mathbf{u}}_1^{b+1}$ and $h\left(\sigma_1^b\cdots\sigma_m^b\right)^c\sigma_1\cdots\sigma_m\cdot\bar{\mathbf{u}}_m^b$ for some integers $b,c\not\in k\mathbb{Z}$ and some $h\in\bar{G}_m^k$. Similarly to above, this implies that
$$
\sigma_1^{b-a+1}\left<\sigma_1\sigma_m\right>\cap\left(\sigma_1^b\cdots\sigma_m^b\right)^c\left<\sigma_{m-1}\sigma_{m}\right>
$$
is not empty. However, since $m>3$, the character $\mathfrak{D}(2)$ evaluates to 1 on the left set and to $\mu^{bc}\neq1$ on the right set, so the intersection is empty.
\item \textbf{Sub-case 2.4:} (1 below $m$) From the left corner of the fundamental domain we can deduce that osculation comes from edges $h\left(\sigma_1^b\cdots\sigma_m^b\right)^c\cdot\bar{\mathbf{u}}_m^{b+1}$ and $h\sigma_1^{b+1}\cdots\sigma_{m-1}^{b+1}\sigma_m^b\cdot\bar{\mathbf{u}}_1^b$ for some integers $b,c\not\in k\mathbb{Z}$ and some $h\in\bar{G}_m^k$. Similarly to above, this implies that
$$
\left(\sigma_1^b\cdots\sigma_m^b\right)^c\sigma_m\left<\sigma_{m-1}\sigma_m\right>\cap\sigma_1^{b-a}\left<\sigma_1\sigma_m\right>
$$
is not empty. However, since $m>3$, the character $\mathfrak{D}(2)$ evaluates to 1 on the right set and to $\mu^{bc}\neq1$ on the left set, so the intersection is empty. Hence there is no inter-osculation between hyperplanes in $\bar{X}_m^k$.
\end{itemize}

Hence $\bar{X}_m^k$ is a special cube complex.

\medskip

\emph{Proof of Theorem \ref{thm:main}:} We have that $\ker{\phi_m^k}$ is torsion-free and $X_m^k$ is simply-connected, therefore $\ker{\phi_m^k}=\pi_1\left(\bar{X}_m^k\right)$. We also have that $\ker{\phi_m^k}$ is a finite-index subgroup of $G_m^k$, hence $G_m^k$ is virtually special. By Theorem \ref{pre:wise}, $\ker{\phi_m^k}$ embeds into a finitely generated RAAG, so is residually finite. Finally, as $\ker{\phi_m^k}$ is a finite-index subgroup, $G_m^k$ is also residually finite.\hfill \qed

\section{Remarks}
\label{sec:7}

\textbf{1.} Note that the abelianisation of $G_m^k$ is
$$
{G_m^k}^{ab}=\mathbb{Z}^m / \left<(k,k,\dots,k)\right>.
$$
Using Smith Normal Form, we get
$$
\underbrace{(k,k,\dots,k)}_\text{$m$}\mapsto(k,\underbrace{0,\dots,0}_\text{$m-1$})\implies {G_m^k}^{ab}\cong C_k\times\mathbb{Z}^{m-1}
$$
and hence $G_m^k$ are distinct up to isomorphism for different integer pairs $(m,k)$.

\medskip

\textbf{2.} The action of $G_m^k$ on the CAT(0) cube complex $X_m^k$ is interesting, because even though there are only finitely many $G_m^k$-orbits of hyperplanes, we get infinitely many $\text{Stab}\left(\mathfrak{H}\right)$-orbits of hyperplanes crossing a given hyperplane $\mathfrak{H}$. This makes it difficult to apply existing tools such as Theorem 4.1 in \cite{coxeter} for proving that the group is virtually special.

For an explicit example, consider the hyperplane $[\mathbf{u}_2^0]$. From the fundamental domain in section 4, we have $\mathbf{u}_2^0\sim\mathbf{u}_2^i\ \forall\ i\in\mathbb{Z}$ and also $\mathbf{u}_2^i\perp\mathbf{u}_3^i\ \forall\ i\in\mathbb{Z}$. Similarly to Section \ref{sec:5} we can compute the hyperplane stabilisers
$$
\text{Stab}\left([\mathbf{u}_2]\right)=\left<x_1^a x_2^a\ |\ \forall\ a\in\mathbb{Z}\right>,\ \text{Stab}\left([\mathbf{u}_3]\right)=\left<x_2^a x_3^a\ |\ \forall\ a\in\mathbb{Z}\right>,
$$
as well as $\mathbf{u}_3^i\sim x_1^i\cdot\mathbf{u}_3^0$. This implies that the set of coefficients $g$ for which $[\mathbf{u}_2^0]$ crosses $[g\cdot\mathbf{u}_3^0]$ is $I=\{x_1^i\ |\ i\in\mathbb{Z}\}$. Two elements of $I$ give the same hyperplane in the $\text{Stab}\left([\mathbf{u}_2^0]\right)$-orbit of hyperplanes which cross $[\mathbf{u}_2^0]$ if one can get from one to the other by multiplying on the left by an element from $\text{Stab}\left([\mathbf{u}_2]\right)$ and on the right by an element from $\text{Stab}\left([\mathbf{u}_3]\right)$. If, after this identification, there would be only a finite list of elements left, then there would also be a finite list of such elements in the abelianisation. However, in ${G_m^k}^{ab}$ this consists of all elements of the form $x_1^{i+a}x_2^{a+b}x_3^b$ for some integers $a,b,i$. To remain within $I$, we require $b=a=0$, so actually there are infinitely many elements in the abelianisation after identification.

This means that there are infinitely many $\text{Stab}\left([\mathbf{u}_2^0]\right)$-orbits of hyperplanes crossing $[\mathbf{u}_2^0]$. Similar infinite behaviour can occur with other hyperplanes and with osculation.

\medskip

\textbf{3.} The groups in Theorem \ref{thm:main} are not finitely presented because of Corollary 14.5 in \cite{uncountably}. Lemma 14.4 applies with the modification that a relation $x_1^i\cdots x_m^i$ becomes $\left(x_1^i\cdots x_m^i\right)^k$ when $i\not\in S$.

\medskip

\textbf{4.} The special groups which are the finite index subgroups can be naturally thought of as kernels of maps to $\mathbb{Z}$ in the following way.

In a more general setting of a virtually torsion-free generalised Bestvina-Brady group $G$ of finite ramification with associated locally finite $\text{CAT}(0)$ cube complex $X$, we have the following short exact sequence:
$$
1\to \ker{\phi}\to G\to \bar{G}\to 1,
$$
where $\bar{G}$ is some finite group and $\ker{\phi}$ is torsion-free. From Theorem \ref{crit}, the associated branching set is periodic, hence the complex $X$ is periodic. Therefore $X/\ker{\phi}$ is a $\mathbb{Z}$-cover of some finite cube complex $\chi$ and we also get a short exact sequence
$$
1\to \ker{\phi}\to \pi_1(\chi)\to \mathbb{Z}\to 1.
$$

The finite complex $\chi$ can be thought of as a branched Salvetti complex, because in the case of a Bestvina-Brady group, this would actually be a Salvetti complex and $\ker{\phi}$ the associated Bestvina-Brady group. It is the author's intention to study properties of such complexes further, as well as classify groups in \hyperlink{theset}{$\mathfrak{R}$}, the set of generalised Bestvina-Brady groups of finite ramification which are virtually special.

\section*{Appendix}
\hypertarget{apx}{}

Recall the following facts from Section \ref{sec:4}:

\medskip

(1.) A part of the descending link of $X^i$ is shown in Figure \ref{fig:link2}.

\begin{figure}[h!]
\centering
\begin{tikzpicture}
\coordinate (left) at (0,0);
\coordinate (mid1) at (2.5cm,0);
\coordinate (mid2) at (5cm,0);
\coordinate (right) at (7.6cm,0);
\coordinate (rightest) at (10.7cm,0);
\draw[very thick] (left) -- node[below] {$\underline{\mathbf{s}}_1^i$} (mid1);
\draw[very thick] (rightest) -- node[below left] {$\underline{\mathbf{s}}_m^i$} (right);
\draw[very thick] (mid2) -- node[below] {$\underline{\mathbf{s}}_{m-1}^i$} (right);
\draw[very thick,dashed] (mid1) -- (mid2);
\draw (left) node [draw, fill=gray!12,rounded corners] {$\mathbf{u}_1^i$};
\draw (mid1) node [draw,fill=gray!12,rounded corners] {$\mathbf{u}_2^i$};
\draw (mid2) node [draw,fill=gray!12,rounded corners] {$\mathbf{u}_{m-1}^i$};
\draw (right) node [draw,fill=gray!12,rounded corners] {$\mathbf{u}_{m}^i$};
\draw (rightest) node [draw,fill=gray!12,rounded corners] {$x_1^i\cdots x_m^i\cdot\mathbf{u}_{1}^i$};
\end{tikzpicture}
\caption{Part of the descending link of $X^i$.}
\label{fig:link2}
\end{figure}
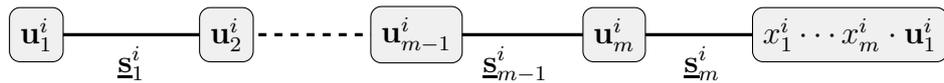

(2.) Vertex $X^i$, for positive $i$, has the following vertices on the boundary of its shadow:

\begin{align*}
 & & & & & & X^0 &,\\
 x_1\cdot X^0&,& x_1^2\cdot X^0&,& \dots&\ ,& x_1^i\cdot X^0&,\\
  x_1^i x_2\cdot X^0&,& x_1^i x_2^2\cdot X^0&,& \dots&\ ,& x_1^i x_2^i\cdot X^0&,\\
  & & \vdots& & & & & \\
  x_1^i x_2^i \cdots x_{m-1}^i x_m\cdot X^0&,& x_1^i x_2^i \cdots x_{m-1}^i x_m^2\cdot X^0&,& \dots&\ ,& x_1^i x_2^i\cdots x_m^i\cdot X^0.&
\end{align*}

(3.) Vertex $X^i$, for negative $i$, has the following vertices on the boundary of its shadow:

\begin{align*}
 & & & & & & X^0 &,\\
 x_1^{-1}\cdot X^0&,& x_1^{-2}\cdot X^0&,& \dots&\ ,& x_1^i\cdot X^0&,\\
  x_1^i x_2^{-1}\cdot X^0&,& x_1^i x_2^{-2}\cdot X^0&,& \dots&\ ,& x_1^i x_2^i\cdot X^0&,\\
  & & \vdots& & & & & \\
  x_1^i x_2^i \cdots x_{m-1}^i x_m^{-1}\cdot X^0&,& x_1^i x_2^i \cdots x_{m-1}^i x_m^{-2}\cdot X^0&,& \dots&\ ,& x_1^i x_2^i\cdots x_m^i\cdot X^0&.
\end{align*}

We prove Lemma \ref{funlemma} by induction on $j$.

\medskip

\textbf{Base case:} ($j=1$) Applying (2.) to vertex $X^1$ in combination with (1.) yields:

\begin{figure}[h]
\centering
\begin{tikzpicture}
\coordinate (top) at (0,2cm);
\coordinate (left) at (-2cm,0);
\coordinate (right) at (2cm,0);
\fill[gray!20] (left) -- (top) -- (right) -- cycle;
\draw[very thick] (right) -- node[above right] {$\mathbf{u}_{2}^{1}$} (top);
\draw[very thick] (left) -- node[above left] {$\mathbf{u}_{1}^{1}$} (top);
\draw[very thick,dashed] (left) -- node[above] {$\underline{\mathbf{s}}^0_1$} (right);
\draw (left) node [draw, fill=gray!12,rounded corners] {$X^0$};
\draw (right) node [draw, fill=gray!12,rounded corners] {$x_1\cdot X^0$};
\draw (top) node [draw, fill=gray!12,rounded corners] {$X^1$};
\end{tikzpicture}
\end{figure}

Applying (3.) to vertex $X^{-1}$ and keeping in mind that edge $\mathbf{u}_1^0$ joins $X^{-1}$ to $X^0$ yields:

\begin{figure}[h]
\centering
\begin{tikzpicture}
\coordinate (bottom) at (0,-2cm);
\coordinate (left) at (-2cm,0);
\coordinate (right) at (2cm,0);
\fill[gray!20] (left) -- (bottom) -- (right) -- cycle;
\draw[very thick] (right) -- node[below right] {$\mu\cdot\mathbf{u}_{2}^{0}$} (bottom);
\draw[very thick] (left) -- node[below left] {$\mathbf{u}_{1}^{0}$} (bottom);
\draw[very thick,dashed] (left) -- node[below] {$\lambda\cdot\underline{\mathbf{s}}^0_1$} (right);
\draw (left) node [draw, fill=gray!12,rounded corners] {$X^0$};
\draw (right) node [draw, fill=gray!12,rounded corners] {$x_1^{-1}\cdot X^0$};
\draw (bottom) node [draw, fill=gray!12,rounded corners] {$X^{-1}$};
\end{tikzpicture}
\end{figure}

for some $\mu,\lambda\in G_m^k$. Since 0 is not a branching layer, $X^0$ has a unique edge from $\text{Orb}\left(\mathbf{u}_2^0\right)$ in its descending link. From (1.) we know that this is $\mathbf{u}_2^0$, which means that $\mu = x_1^{-1}$. This also means that $\lambda=x_1^{-1}$, since square $\underline{\mathbf{s}}_1^0$ has edge $\mathbf{u}_2^0$ attached underneath $X^0$. Hence square $\underline{\mathbf{s}}_1^0$ is as claimed in Lemma \ref{funlemma}. We now proceed to proving the general form of square $\underline{\mathbf{s}}_1^i$ by induction, depending on whether $i$ is positive or negative.

\medskip

\textbf{Case 1:} ($i$ positive) Claim: Square $\underline{\mathbf{s}}_1^i$ is as in Lemma \ref{funlemma} for $i>0$.
\begin{proof}
Base case: ($i=1$) Considering the pyramid with apex $X^2$ and applying (2.) along with (1.) gives Figure \ref{j1i2}.

\begin{figure}[h]
\centering
\begin{tikzpicture}
\coordinate (c1) at (2cm,2cm);
\coordinate (c2) at (6cm,2cm);
\coordinate (c3) at (10cm,2cm);
\coordinate (d1) at (4cm,4cm);
\coordinate (d2) at (8cm,4cm);
\coordinate (apex) at (6cm,6cm);

\fill[gray!20] (apex) -- (c3) -- (c1) -- cycle;

\draw[very thick] (apex) -- node[above left] {$\mathbf{u}_1^2$} (d1);
\draw[very thick] (apex) -- node[above right] {$\mathbf{u}_2^2$} (d2);
\draw[very thick] (d1) -- node[above left] {$\mathbf{u}_1^1$} (c1);
\draw[very thick] (d2) -- node[below right] {$\delta\cdot \mathbf{u}_1^1$} (c2);
\draw[dashed] (d2) -- (c3);
\draw[very thick] (d1) -- node[below left] {$\mathbf{u}_2^1$} (c2);

\draw[very thick,dashed] (d1) -- node[above] {$\underline{\mathbf{s}}_1^1$} (d2);
\draw[very thick,dashed] (c1) -- node[below] {$\underline{\mathbf{s}}_1^0$} (c2);
\draw[dashed] (c2) -- (c3);

\draw (apex) node [draw, fill=gray!10,rounded corners] {$X^2$};
\draw (d1) node [draw, fill=gray!10,rounded corners] {$X^1$};
\draw (d2) node [draw, fill=gray!10,rounded corners] {$\gamma\cdot X^1$};
\draw (c1) node [draw, fill=gray!10,rounded corners] {$X^0$};
\draw (c2) node [draw, fill=gray!10,rounded corners] {$x_1\cdot X^0$};
\draw (c3) node [draw, fill=gray!10,rounded corners] {$x_1^2\cdot X^0$};
\end{tikzpicture}
\caption{Figuring out the square $\underline{\mathbf{s}}_1^1$.}
\label{j1i2}
\end{figure}

Now by considering which translate of the square $\underline{\mathbf{s}}_1^0$ fits in the bottom-right, we determine that $\delta=\gamma=x_1$. This completes the square $\underline{\mathbf{s}}_1^1$.

Inductive step: ($i>1$) Assume that the claim has already been proven for all $i$ up to and including $n\geqslant 1$. Consider the pyramid with apex $X^{n+2}$. Applying (1.) gives the top half of square $\underline{\mathbf{s}}_1^{n+1}$. Applying (2.) gives the vertices in layer 0 at the bottom of the face of the pyramid, and working upwards using the inductive hypothesis completes square $\underline{\mathbf{s}}_1^{n+1}$.

\end{proof}

\textbf{Case 2:} ($i$ negative) Claim: Square $\underline{\mathbf{s}}_1^{-i}$ is as in Lemma \ref{funlemma} for $i>0$.
\begin{proof}

Base case: ($i=1$) Consider the pyramid with apex $X^{-2}$ and apply (2.). Note that we know what the line from $X^{-2}$ to $X^0$ (where two faces of the pyramid meet) looks like, because by convention this is a series of edges of type $\mathbf{u}_1$. By also considering which translates of $\underline{\mathbf{s}}_1^0$-squares go at the top, we get Figure \ref{j1im2}.

\begin{figure}[h]
\centering
\begin{tikzpicture}
\coordinate (b1) at (0cm,0cm);
\coordinate (b2) at (4cm,0cm);
\coordinate (b3) at (8cm,0cm);
\coordinate (d1) at (2cm,-2cm);
\coordinate (d2) at (6cm,-2cm);
\coordinate (apex) at (4cm,-4cm);

\fill[gray!20] (apex) -- (b3) -- (b1) -- cycle;

\draw[very thick] (apex) -- node[below left] {$\mathbf{u}_1^{-1}$} (d1);
\draw[very thick] (apex) -- node[below right] {$\mu\cdot\mathbf{u}_2^{-1}$} (d2);
\draw[very thick] (d1) -- node[below left] {$\mathbf{u}_1^0$} (b1);
\draw[very thick] (d1) -- node[above left] {$x_1^{-1}\cdot\mathbf{u}_2^0$} (b2);
\draw[very thick] (b2) -- node[above right] {$x_1^{-1}\cdot\mathbf{u}_1^0$} (d2);
\draw[dashed] (d2) -- (b3);
\draw[very thick, dashed] (b1) -- node[above] {$x_1^{-1}\cdot\underline{\mathbf{s}}_1^0$} (b2);
\draw[very thick, dashed] (b2) -- node[above] {$x_1^{-2}\cdot\underline{\mathbf{s}}_1^0$} (b3);
\draw[very thick, dashed] (d1) -- node[below] {$\alpha\cdot\underline{\mathbf{s}}_1^{-1}$} (d2);
\draw (b1) node [draw, fill=gray!10,rounded corners] {$ X^0$};
\draw (b2) node [draw, fill=gray!10,rounded corners] {$x_1^{-1}\cdot X^0$};
\draw (b3) node [draw, fill=gray!10,rounded corners] {$x_1^{-2}\cdot X^0$};
\draw (d1) node [draw, fill=gray!10,rounded corners] {$X^{-1}$};
\draw (d1) node [draw, fill=gray!10,rounded corners] {$X^{-1}$};
\draw (d2) node [draw, fill=gray!10,rounded corners] {$x_1^{-1}\cdot X^{-1}$};
\draw (apex) node [draw, fill=gray!10,rounded corners] {$X^{-2}$};
\end{tikzpicture}
\caption{Figuring out the square $\underline{\mathbf{s}}_1^{-1}$.}
\label{j1im2}
\end{figure}

Now by applying (1.) to vertex $X^0$, we can determine that $\alpha=x_1^{-1}$. In order to compute $\mu$, we must take into account the full link of vertex $X^{-1}$. Using (1.) applied to vertex $X^0$, the square $x_1^{-1}\cdot\underline{\mathbf{s}}_1^0$ and Figure \ref{j1im2}, we can deduce a part of the full link of vertex $X^{-1}$, shown in Figure \ref{j1xm1}. Note that labels on the edges in the link signify which squares these edges of the link are originating from.

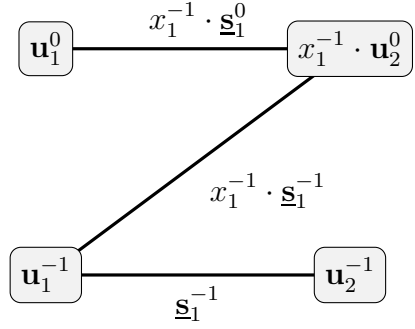
\begin{figure}
\centering
\begin{tikzpicture}
\coordinate (bl) at (0cm,0cm);
\coordinate (br) at (4cm,0cm);
\coordinate (tl) at (0cm,3cm);
\coordinate (tr) at (4cm,3cm);
\draw[very thick] (tl) -- node[above] {$x_1^{-1}\cdot\underline{\mathbf{s}}_1^0$} (tr);
\draw[very thick] (tr) -- node[below right] {$x_1^{-1}\cdot\underline{\mathbf{s}}_1^{-1}$} (bl);
\draw[very thick] (bl) -- node[below] {$\underline{\mathbf{s}}_1^{-1}$} (br);
\draw (bl) node [draw, fill=gray!10,rounded corners] {$\mathbf{u}_1^{-1}$};
\draw (br) node [draw, fill=gray!10,rounded corners] {$\mathbf{u}_2^{-1}$};
\draw (tr) node [draw, fill=gray!10,rounded corners] {$x_1^{-1}\cdot\mathbf{u}_2^{0}$};
\draw (tl) node [draw, fill=gray!10,rounded corners] {$\mathbf{u}_1^{0}$};
\end{tikzpicture}
\caption{A part of the full link of vertex $X^{-1}$.}
\label{j1xm1}
\end{figure}

From \cite{uncountably}, the full link of vertex $X^{-1}$ is isomorphic to the \hyperlink{sdbl}{spherical double} of $L$. Therefore there must be an edge between vertices $\mathbf{u}_1^0$ and $\mathbf{u}_2^{-1}$ in Figure \ref{j1xm1}. By translation, this means that in the full link of $x_1^{-1}\cdot X^{-1}$, the edge $x_1^{-1}\cdot\mathbf{u}_1^0$ is joined to the edge $x_1^{-1}\cdot\mathbf{u}_2^{-1}$. Therefore we deduce $\mu=x_1^{-1}$. This completes the square $\underline{\mathbf{s}}_1^{-1}$.

Inductive step: ($i>1$) Assume that the claim has already been proven for all $i$ up to and including $n\geqslant 1$. Consider the pyramid with apex $X^{-(n+2)}$. Applying (1.) gives the top half of square $\underline{\mathbf{s}}_1^{-(n+1)}$. Applying (2.) gives the vertices in layer 0 at the bottom of the face of the pyramid, and working downwards using the inductive hypothesis, along with a consideration of the full link of vertex $X^{-(n+1)}$ as above, completes square $\underline{\mathbf{s}}_1^{n+1}$.

\end{proof}

This completes the proof of Lemma \ref{funlemma} for $j=1$.

\medskip

\textbf{Inductive step:} ($1<j<m$) Suppose that Lemma \ref{funlemma} has already been proven for $n$ up to and including $j-1$. Then, using exactly the same method of proof as above, we can also deduce the general form of the square $\underline{\mathbf{s}}_{n+1}^i$. The only modification is that we use the inductive hypothesis to deduce what the line from $X^0$ to $X^{-i}$ looks like, on the appropriate corner of the pyramid (where two faces of the pyramid meet).

\medskip

\textbf{Final step:} ($j=m$) Once we have deduced what the squares $\underline{\mathbf{s}}_n^i$ for $1\leqslant n\leqslant m-1$ look like, we can do exactly the same for the squares $\underline{\mathbf{s}}_m^i$. This time the coefficients break the pattern because of the new factor in (1.), resulting in a different form for the square $\underline{\mathbf{s}}_m^i$ in Figure \ref{fig:bigfun}. Note that because $x_1^i\cdots x_m^i\cdot X^i=X^i$, the vertex coefficients in any of the squares of the fundamental domain in Figure \ref{fig:bigfun} are not necessarily unique.

\medskip

This completes the proof of Lemma \ref{funlemma}.

\bibliographystyle{acm}

\end{document}